\begin{document}
\pagestyle{myheadings}
\title{A conjecture of eigenvalues of threshold graphs}
\author{ Fernando Tura}
\address{ Departamento de Matem\'atica, UFSM,97105--900 Santa Maria, RS, Brazil}
\email{\tt ftura@smail.ufsm.br}

\pdfpagewidth 8.5 in
\pdfpageheight 11 in

\def\floor#1{\left\lfloor{#1}\right\rfloor}

\newcommand{\diagonal}[8]{
\begin{array}{| c | r |}
b_i & d_i \\
\hline
 #1 & #5 \\
        & \\
 #2 & #6 \\
        & \\
 #3  & #7 \\
         &  \\
 #4  & #8 \\
 \hline
 \end{array}
}

\newcommand{\thresholdmatrix}[8]{
\left [
     \begin{array}{ccccccc}
            &   &   &   &  #1   &   & #2 \\
            &   &   &   &  \vdots   &   & \vdots \\
            &   &   &   &  #1  &   & #2 \\
     \\
         #3  & \ldots   & #3 &   &  #4    &   & #5 \\
     \\
         #6  & \ldots   & #6  &   &  #7   &   &  #8 \\
    \end{array}
\right ]
}

\newcommand{\lambdamin}{\lambda_{\min, n}}
\newcommand{\formulamin}{
 \frac{  (
   \lfloor \frac{n}{3} \rfloor \! - \! 1 ) \! - \! \sqrt{ ( \lfloor \frac{n}{3} \rfloor \! - \! 1)^2 \! +\!  4
   (n \! - \! \lfloor \frac{n}{3} \rfloor )
   \lfloor \frac{n}{3} \rfloor
  }
  }{2}
}
\newcommand{\casei}{{\bf case~1}}
\newcommand{\subia}{{\bf subcase~1a}}
\newcommand{\subib}{{\bf subcase~1b}}
\newcommand{\subic}{{\bf subcase~1c}}
\newcommand{\caseii}{{\bf case~2}}
\newcommand{\subiia}{{\bf subcase~2a}}
\newcommand{\subiib}{{\bf subcase~2b}}
\newcommand{\caseiii}{{\bf case~3}}
\newcommand{\myvar}{x}
\newcommand{\exvar}{\frac{\sqrt{3} + 1}{2}}
\newcommand{\PrfSketch}{{\bf Proof (Sketch): }}
\newcommand{\boldQ}{\mbox{\bf Q}}
\newcommand{\boldR}{\mbox{\bf R}}
\newcommand{\boldZ}{\mbox{\bf Z}}
\newcommand{\boldc}{\mbox{\bf c}}
\newcommand{\sign}{\mbox{sign}}
\newcommand{\alphaseq}{{\pmb \alpha}_{G,\myvar}}
\newcommand{\alphaseqlambda}{{\pmb \alpha}_{G,\lambda}}
\newcommand{\alphaseqGprime}{{\pmb \alpha}_{G^\prime,\myvar}}
\newcommand{\alphaseqlam}{{\pmb \alpha}_{G,-\lambdamin}}
\newtheorem{Thr}{Theorem}
\newtheorem{Pro}{Proposition}
\newtheorem{Que}{Question}
\newtheorem{Con}{Conjecture}
\newtheorem{Cor}{Corollary}
\newtheorem{Lem}{Lemma}
\newtheorem{Fac}{Fact}
\newtheorem{Ex}{Example}
\newtheorem{Def}{Definition}
\newtheorem{Prop}{Proposition}
\def\floor#1{\left\lfloor{#1}\right\rfloor}

\newenvironment{my_enumerate}{
\begin{enumerate}
  \setlength{\baselineskip}{14pt}
  \setlength{\parskip}{0pt}
  \setlength{\parsep}{0pt}}{\end{enumerate}
}
\newenvironment{my_description}{
\begin{description}
  \setlength{\baselineskip}{14pt}
  \setlength{\parskip}{0pt}
  \setlength{\parsep}{0pt}}{\end{description}
}

\maketitle

\begin{abstract}
Let $A_n$ be the anti-regular graph of order $n.$ It was conjectured that among all threshold graphs on $n$ vertices, $A_n$ has  the smallest positive eigenvalue and the largest eigenvalue less than $-1.$
Recently, in \cite{Cesar2} was given partial results for this conjecture and identified the critical cases where a more refined method is needed. In this paper, we deal with these cases and confirm that conjecture holds.
\\

\noindent
{\bf keywords:} threshold graph, adjacency  matrix, eigenvalues.  \\
{\bf AMS subject classification:} 15A18, 05C50, 05C85.
\end{abstract}

\section{Introduction}
\label{intro}
A simple graph $G=(V,E)$ is a threshold graph if there exists a function $w: V(G) \longrightarrow [0, \infty)$ 
and a real number $t\geq 0$ called the threshold such that $uv \in E(G)$ if and only if $w(u) + w(v) \geq t.$
This class of graphs was
introduced by Chv\'{a}tal and Hammer \cite{CH1977}
and Henderson and Zalcstein \cite{HZ1977} in 1977.
They are an important class of graphs because of their
numerous applications in many areas such as computer science  and psychology \cite{Mah95}.

One way to characterize threshold graphs is
through an iterative process which
starts with an isolated vertex, and where, at each step, either a new
{\em isolated} vertex is added, or a {\em dominating} vertex is added.
We represent a threshold graph $G$ on $n$ vertices using a
binary string $(b_1, \ldots, b_n)$.
Here $b_i = 0$ if vertex $v_i$ was added as an
isolated vertex, and $b_i = 1$ if $v_i$ was added as a dominating vertex.
We call our representation a {\em creation sequence},
and always take $b_1$ to be zero.
If $n \geq 2$, $G$ is connected if and only if $b_n = 1$.


There is a considerable body of knowledge on the spectral properties of threshold graphs. For example, all eigenvalues except $-1$ and $0$ are {\em main}, meaning that the entries in the associated eigenvector do not sum to zero (see \cite{SF2011}, Theorem 7.5).
With the exception of $-1$ and $0,$ all eigenvalues of threshold graphs are simple \cite{JTT2013}. 
In \cite{JTT2015} was proved that no threshold graph has eigenvalues in the interval $(-1,0).$
For more spectral properties we suggest consulting the articles \cite{Cesar2, Cesar, Bapat, gorbani, JOT2019, lou}.

A distinguished subclass of threshold graphs is the family of anti-regular graphs $A_n$ which are the graphs with only two vertices of equal degrees.
The Figure \ref{figure1} shows the graph $A_{16}.$
In \cite{Cesar}, a nearly complete characterization of the eigenvalues of anti-regular graphs is given, and
was proposed some conjectures about it. Among them, we consider in this work the following one:

\begin{Con} 
\label{conjec}
For each $n,$ the anti-regular graph  $A_n$  has the smallest positive eigenvalue and the largest negative eigenvalue less than $-1$  among all threshold graphs 
on $n$ vertices.
\end{Con}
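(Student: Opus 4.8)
\medskip
\noindent\textbf{Sketch of a proof strategy.}
The plan is to recast the conjecture through the diagonalization algorithm for adjacency matrices of threshold graphs (see \cite{Cesar2}). For a threshold graph $G$ on $n$ vertices, given by its creation sequence, and a real parameter, that algorithm returns the sequence $\alphaseq=(\alpha_1,\dots,\alpha_n)$ which is the diagonal of a diagonal matrix congruent to $A(G)+xI_n$; by Sylvester's law of inertia the number of $\alpha_i$ that are negative (resp.\ zero, positive) is the number of eigenvalues of $G$ strictly less than $-x$ (resp.\ equal to $-x$, strictly greater than $-x$). Write $s_n>0$ for the smallest positive eigenvalue of $A_n$ and $\tau_n<-1$ for its largest eigenvalue below $-1$; closed forms for both are recorded in \cite{Cesar}. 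Since $A_n$ has $s_n$ and $\tau_n$ among its eigenvalues, the conjecture is equivalent to the two statements, for \emph{every} threshold graph $G$ on $n$ vertices: no eigenvalue of $G$ lies in $(0,s_n)$, and no eigenvalue of $G$ lies in $(\tau_n,-1)$. By the remark above these say that the number of negative entries of $\alphaseq$ stays constant as $x$ ranges over $(-s_n,0)$, and again as $x$ ranges over $(1,-\tau_n)$. Together with the known fact that threshold graphs have no eigenvalue in $(-1,0)$ \cite{JTT2015}, proving this pins down the eigenvalue-free window around $[-1,0]$ exactly and shows that $A_n$ sits at both of its ends.

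\smallskip
\noindent The first step is to bring in the partial results of \cite{Cesar2}, which already establish both statements for every $n$ except on a short, explicitly described list of \emph{critical} creation sequences. These critical sequences coincide with the alternating pattern $0\,1\,0\,1\cdots$ away from a bounded segment at one end, and they are precisely the cases in which the comparison-with-$A_n$ argument of \cite{Cesar2} degenerates, because at the relevant value of $x$ several entries of $\alphaseq$ are simultaneously close to $0$. It remains to verify the two statements on this bounded list of families, and this is the task of the present paper.

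\smallskip
\noindent For each critical family I would run the diagonalization algorithm symbolically, holding $x$ fixed at (or infinitesimally inside) the relevant critical value — an explicit algebraic constant, such as $\exvar$ in one regime — and follow the rational functions of $x$ that appear as the $\alpha_i$. What makes this feasible is that along a maximal run of equal symbols of the creation sequence the successive diagonal entries obey one simple recurrence, so within each block the $\alpha_i$ are monotone and their signs are determined by the value entering the block together with the block length; hence deviating from the alternating pattern has a purely local effect that can be read off block by block. Using this, one shows that every critical deviation either leaves the count of negative entries of $\alphaseq$ unchanged or pushes some $\alpha_i$ strictly in the favourable direction, so that no eigenvalue appears in $(0,s_n)$ or in $(\tau_n,-1)$. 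The case split matches the structure of the paper — whether the anomalous segment lies at the start or at the end of the sequence, and the parity of $n$, which decides which end carries the short segment — yielding the three main cases with their subcases.

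\smallskip
\noindent The step I expect to be the main obstacle is exactly this bookkeeping. Near a critical value of $x$ several $\alpha_i$ have comparable small magnitude, so the sign of a diagonal entry turns on a close comparison between two expressions of similar size, and the monotonicity argument must first be transferred to the right auxiliary quantity — typically to a ratio of two consecutive $\alpha_i$ rather than to the $\alpha_i$ themselves — before the inequalities resolve cleanly. Making all of these comparisons come out correctly \emph{uniformly in} $n$, rather than only for $n$ large, and reconciling the even and odd cases, is precisely the refinement over \cite{Cesar2} that is required; carrying it through for every critical family completes the proof of the conjecture.
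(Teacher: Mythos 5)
Your setup coincides with the paper's: the Diagonalize algorithm together with Sylvester's law (Theorem \ref{main1}), the reduction to the explicit list of critical creation sequences from \cite{Cesar2}, and the idea that a local change in the sequence propagates through monotone one-step maps. The genuine gap is in how you propose to decide the critical cases. You plan to run the algorithm ``holding $x$ fixed at the relevant critical value --- an explicit algebraic constant, such as $\frac{\sqrt{3}+1}{2}$.'' No $n$-independent constant can work here: the only universal constants available are the endpoints of the eigenvalue-free interval $\Omega=[\frac{-1-\sqrt{2}}{2},\frac{-1+\sqrt{2}}{2}]$, and both $\lambda^{+}(A_n)$ and $\lambda^{+}(G)$ (respectively $\lambda^{-}(A_n)$ and $\lambda^{-}(G)$) lie strictly outside $\Omega$ and approach its endpoints as $n$ grows. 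A sign count of the diagonal at a fixed $x$ only locates eigenvalues relative to $-x$, so it cannot separate $\lambda^{\pm}(G)$ from $\lambda^{\pm}(A_n)$ --- which is precisely what the critical cases demand and why they survived the analysis of \cite{Cesar2}; the hedge ``or infinitesimally inside'' does not repair this. What the paper does instead is evaluate Diagonalize at the $n$-dependent point $x=-\lambda^{-}(A_n)$ or $x=-\lambda^{+}(A_n)$ (or at the eigenvalue of a neighbouring critical graph), compare the final sign pattern of the critical graph with that of $A_n$, and convert a one-entry surplus of negative (or positive) entries into a strict eigenvalue inequality via Lemma \ref{lema3}; the effect of flipping a single bit on the top entry $\alpha_1$ is then controlled by Lemmas \ref{lema6} and \ref{lema7}. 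Your sketch contains no analogue of this relative comparison device, and without it the sign bookkeeping at a fixed $x$ proves nothing beyond the $\Omega$-free interval already known.

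A second inaccuracy: the critical sequences do not agree with the alternating pattern ``away from a bounded segment at one end.'' For $n$ even they have $s_1=2$ together with one further doubled block that may occur at \emph{any} position (or $s_1=3$), and for $n$ odd a single doubled block anywhere in the sequence. The paper copes with this by establishing a monotone chain of inequalities as the doubled block is moved along the sequence (Theorem \ref{main5}), so that only the extremal members of each chain need to be compared with $A_n$ directly (Corollaries \ref{cor1} and \ref{cor2}); those endpoint comparisons require extra care --- Lemma \ref{lemamax}, the interval $\Omega$, and explicit treatment of the first and last iterations of the algorithm. Your block-by-block monotonicity remark is in the right spirit (it is essentially Lemma \ref{lema4} combined with the composition (\ref{cinco})), but without the comparison lemma, the correct choice of evaluation point, and the chain over positions of the doubled block, the proposal does not close the critical cases.
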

Recently in \cite{Cesar2}, was given partial results for this conjecture and identified the critical cases where a more refined method is needed. More exactly, the conjecture was proved for all threshold graphs on $n$ vertices except for $n-2$ critical cases where the interlacing method fails. 
In this paper, we deal with these cases and confirms that conjecture holds.

The paper is organized as follows. The main tool used to prove the conjecture, and some known results are reviewed in Section~\ref{Diag}. In Section~\ref{main}, we present some auxiliaries results and finally in Section~\ref{conjecture} we confirm that conjecture holds for the remaining cases.

\begin{figure}[h!]
       \epsfig{file=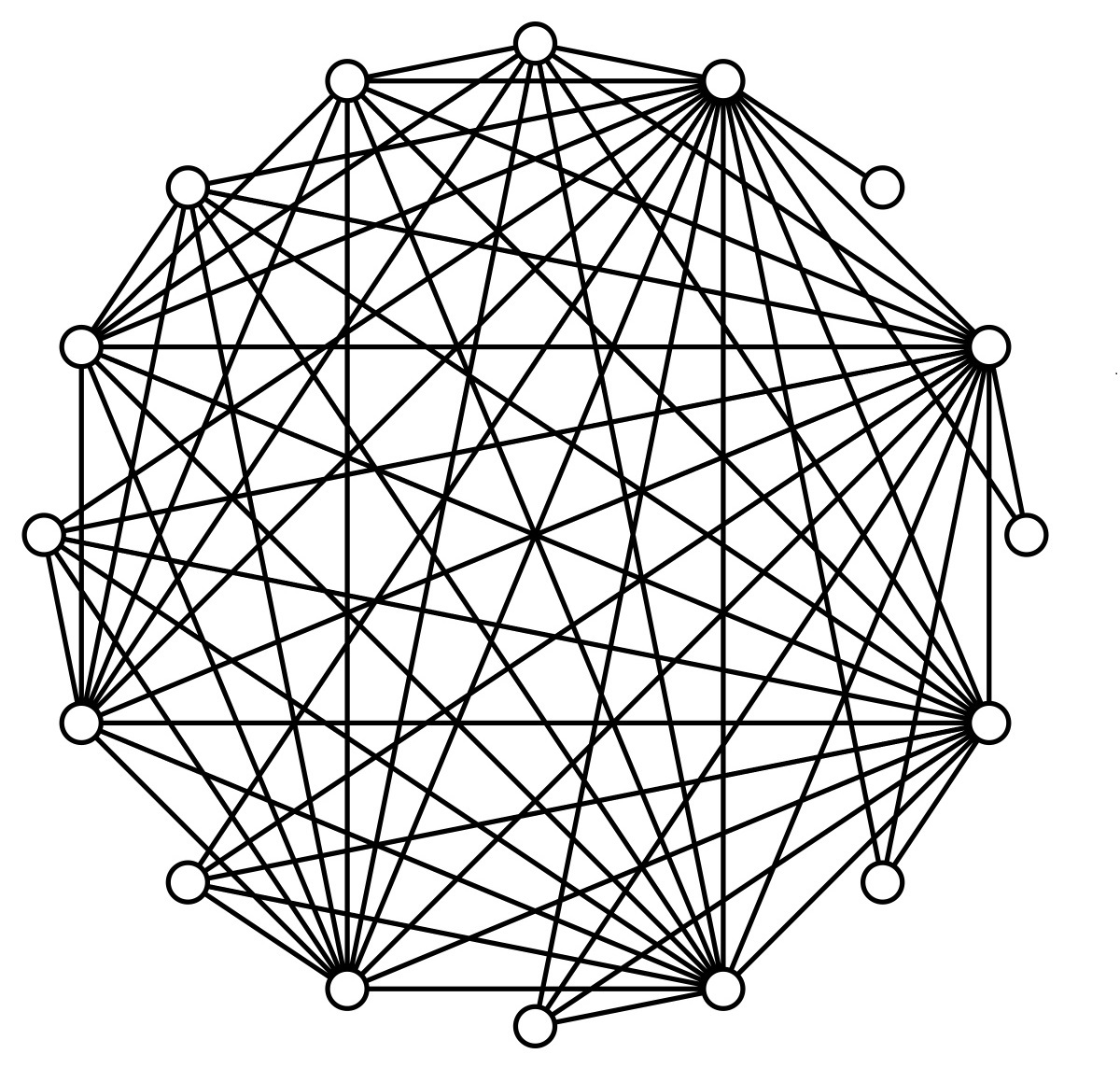, height=5.5cm, width=7cm}
\caption{ The anti-regular graph $A_{16}$}
\label{figure1}
       \end{figure}


\section{Background Results}
\label{Diag}
Recall that two matrices $R$ and $S$ are {\em congruent}
if there exists a nonsingular matrix $P$ such that $R = P^T S P$.
An important tool used in \cite{JTT2013} was an algorithm
for constructing a {\em diagonal} matrix $D$ congruent to $A + \myvar I$,
where $A$ is the adjacency matrix of a threshold graph,
and $\myvar$ is an arbitrary scalar.
The algorithm is shown in Figure~\ref{algo}.
The diagonal elements are stored in the array $d$,
and the graph's initial representation is stored in $b$.

\begin{figure}[h]
{\tt
\begin{tabbing}
aaa\=aaa\=aaa\=aaa\=aaa\=aaa\=aaa\=aaa\= \kill
     \>  Algorithm $\mbox{Diagonalize}(G, \myvar)$ \\
     \> \> initialize $d_i  \leftarrow \myvar$, for all $i$ \\
     \> \> {\bf for } $m = n$ to $2$ \\
     \> \> \>     $\alpha \leftarrow d_m$ \\
     \> \> \> {\bf if} $b_{m-1} = 1$ and $b_{m} = 1$ \\
     \> \> \> \>  {\bf if} $\alpha + \myvar \neq  2$  \verb+            //subcase 1a+ \\
     \> \> \> \> \>   $d_{m-1} \leftarrow  \frac{\alpha \myvar - 1}{\alpha + \myvar - 2}$ \\
     \> \> \> \> \>   $d_m \leftarrow  \alpha + \myvar - 2$  \\
     \> \> \> \>  {\bf else if} $\myvar =  1$  \verb+           //subcase 1b+  \\
     \> \> \> \> \>   $d_{m-1} \leftarrow 1$  \\
     \> \> \> \> \>   $d_m  \leftarrow 0$  \\
     \> \> \> \>  {\bf else }                   \verb+                  //subcase 1c+ \\
     \> \> \> \> \>   $d_{m-1} \leftarrow 1$  \\
     \> \> \> \> \>   $d_m \leftarrow - (1 - \myvar)^2$  \\
     \> \> \> \> \>   $b_{m-1} \leftarrow 0 $  \\
     \> \> \> {\bf else if} $b_{m-1} = 0$ and $b_{m} = 1$ \\
     \> \> \> \>  {\bf if} $\myvar = 0$  \verb+               //subcase 2a+    \\
     \> \> \> \> \>   $d_{m-1} \leftarrow 1$ \\
     \> \> \> \> \>   $d_m \leftarrow -1$ \\
     \> \> \> \>  {\bf else }  \verb+                  //subcase 2b+   \\
     \> \> \> \> \>   $d_{m-1} \leftarrow \alpha - \frac{1}{\myvar}$ \\
     \> \> \> \> \>   $d_m  \leftarrow \myvar$ \\
     \> \> \> \> \>   $b_{m-1} \leftarrow 1 $  \\
     \> \>  {\bf end loop}
\end{tabbing}
}
 \caption{\label{algo} Algorithm Diagonalize.}
\end{figure}

Algorithm \verb+Diagonalize+ works bottom up.
For a graph of order $n$, it makes $n-1$ passes.
Each diagonal element,
except the first and last,
participates in two iterations.
During each iteration, the assignment
to $d_m$ produces a final diagonal element.
On the last iteration, when $m = 2$, the
assignment to $d_{m-1}$ also produces
a final diagonal element at the top.

Note when $b_m = 0$, the algorithm does nothing and moves to the next step.
Also note that the values in $b$ can change.
In each iteration, the algorithm executes one of the five subcases.
It should be noted that \subia~ and \subiib~ are the normal cases,
and the other three subcases represent singularities.
Executing \subib~ requires $\myvar = 1$,
executing \subiia~ requires $\myvar = 0$,
and executing \subic~ requires $\alpha + \myvar = 2$.

The next  result from \cite{JTT2013} will be used throughout the paper.

\begin{Thr}
\label{main1}
Let $G$ be a threshold graph and let $(d_v)_{v \in G}$ be the sequence produced by Diagonalize $(G,-x).$
Then the diagonal matrix $D = diag(d_v)_{v \in G}$ is congruent to $A(G) -xI,$ so that  the number of (positive - negative - zero) entries in $(d_v)_{v\in G}$ 
is equal to the number eigenvalues of $A(G)$  that are (greater than $\myvar$ - small than $\myvar$ - equal to $\myvar$).  
\end{Thr}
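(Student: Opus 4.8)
My plan is to deduce the counting statement from the congruence asserted in the first sentence, using \emph{Sylvester's law of inertia}: congruent real symmetric matrices have the same inertia, i.e.\ the same numbers of positive, of negative and of zero eigenvalues. Since $A(G)-\myvar I$ has the same eigenvectors as $A(G)$ with every eigenvalue shifted by $-\myvar$, the positive (negative, zero) entries of $D$ then count exactly the eigenvalues of $A(G)$ that are greater than (smaller than, equal to) $\myvar$. So everything reduces to producing a nonsingular $P$ with $D=P^{T}(A(G)-\myvar I)P$, and I would read $P$ off the run of the algorithm.

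The argument goes by induction on $n$, showing that each pass of the main loop of \texttt{Diagonalize} is realized by a congruence. The base case $n=1$ is immediate. For the inductive step I examine the first pass, $m=n$, using the structural fact that in the creation-sequence order $v_i\sim v_j$ (for $i<j$) if and only if $b_j=1$; hence the last row and column of $A(G)-\myvar I$ are determined by $b_n$ alone. If $b_n=0$, then $v_n$ is isolated, $A(G)-\myvar I=(A(G-v_n)-\myvar I)\oplus[-\myvar]$, the pass $m=n$ does nothing, and the remaining passes are exactly \texttt{Diagonalize}$(G-v_n,-\myvar)$, so the inductive hypothesis for $G-v_n$ finishes this case. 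If $b_n=1$, then $v_n$ is dominating and one clears its row and column by symmetric elimination (and, when $b_{n-1}=1$, those of the twin $v_{n-1}$ as well); the five branches of the algorithm are precisely the cases of this elimination, separated by whether $b_{n-1}$ is $0$ or $1$ and by whether the entry one would pivot on vanishes. In each branch I would check that the prescribed assignments to $d_{n-1},d_n$ --- and, where it occurs, the reset of $b_{n-1}$ --- are exactly the effect of a valid congruence, and that what is left is $[\,\mbox{the finalized entries}\,]\oplus M'$ with $M'$ of the same shape as the input: the adjacency matrix of the threshold graph on the \emph{updated} sequence $(b_1,\dots,b_{n-1})$, minus $\myvar I$, with at most its bottom-right diagonal entry replaced by the current register value $\alpha$. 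Since this is exactly the situation handled by a \texttt{Diagonalize} call, the inductive hypothesis applies and the step closes. I would phrase this shape as an explicit loop invariant and verify that every branch preserves it.

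The main obstacle is the three singular branches \subib{}, \subic{} and \subiia{}, where the natural pivot is $0$ and the Schur-complement step is unavailable; there one must exhibit an \emph{ad hoc} congruence diagonalizing the relevant bordered $2\times2$ block and check that its side effect on the rest of the block matches the prescribed change of $b_{n-1}$ --- in \subic{}, in particular, $v_{n-1}$ has to become isolated in the reduced graph, which requires a genuine computation and not an appeal to structure. A secondary subtlety, present already in the regular branches, is keeping the off-diagonal pattern $0/1$ while eliminating a vertex adjacent to everything: this forces the elimination to be organized around a suitable neighbour of $v_n$ as pivot rather than around $v_n$ itself, and matching the outcome to the algorithm's closed-form updates is where most of the computation sits. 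Once the invariant and these branch-by-branch checks are in place, the induction --- and hence the theorem --- is complete.
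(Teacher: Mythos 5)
Your proposal is essentially the proof of the cited source: this paper does not prove Theorem \ref{main1} at all (it imports it from \cite{JTT2013}), and the argument there is exactly your plan --- an induction showing that each pass of the loop is realized by elementary congruence operations (Schur-complement elimination in the regular subcases, an ad hoc treatment of the singular subcases, plus a relabeling in subcase 2b), with the loop invariant that the untouched block remains a threshold-shaped matrix whose corner diagonal entry carries the register value, and Sylvester's law of inertia converting the congruence into the eigenvalue count. Your sketch identifies the correct invariant and flags precisely the delicate branches, so it is sound and takes the same route as the original proof.
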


\begin{Lem}
\label{1c}
If algorithm \verb+Diagonalize+ executes  \subic, then it will leave both a permanent negative and positive number on the diagonal.
\end{Lem}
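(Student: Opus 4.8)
The plan is to trace what Algorithm \verb+Diagonalize+ does when \subic{} executes at some iteration $m$, and then track the fate of the two values $d_{m-1}$ and $d_m$ it writes during the remainder of the run. When \subic{} fires we have $b_{m-1}=b_m=1$ and $\alpha+\myvar=2$ (equivalently $\alpha = 2-\myvar$), and the algorithm sets $d_m \leftarrow -(1-\myvar)^2$ and $d_{m-1}\leftarrow 1$, and resets $b_{m-1}\leftarrow 0$. First I would dispose of the degenerate subcase $\myvar = 1$: if $\myvar=1$ then $\alpha+\myvar=2$ forces $\alpha=1$, but this is exactly the situation handled by \subib{} (the algorithm tests $\myvar=1$ before falling through to \subic), so \subic{} only executes when $\myvar \neq 1$, and hence $d_m = -(1-\myvar)^2 < 0$ strictly. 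Moreover $m$ is the index being finalized on this iteration, so once \subic{} assigns $d_m$, that entry is never touched again — it is a permanent negative number on the diagonal. That gives half the claim essentially for free.

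The remaining work is to show that the $1$ placed in position $d_{m-1}$ eventually contributes a permanent positive entry. Here I have to be careful: $d_{m-1}$ is \emph{not} yet final — position $m-1$ still participates in the next iteration ($m'=m-1$), unless $m-1=1$, in which case it is already permanent and we are done. So assume $m-1\geq 2$. The key observation is that \subic{} has just set $b_{m-1}\leftarrow 0$. Therefore at the next iteration the relevant pair is $(b_{m-2}, b_{m-1}) = (b_{m-2}, 0)$, and an inspection of the algorithm shows that \emph{both} branches of the outer {\bf if} require $b_m = 1$ in the current iteration's indexing — i.e. they require the lower index of the pair, here $b_{m-1}$, to equal $1$. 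Since $b_{m-1}=0$, neither branch applies: the algorithm "does nothing and moves to the next step," exactly as noted in the text for the $b_m=0$ case. Consequently $d_{m-1}$ is never modified again, and it remains the permanent value $1>0$ on the diagonal.

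Assembling these: whenever \subic{} executes, it deposits $d_m = -(1-\myvar)^2 < 0$, which is immediately permanent, and $d_{m-1}=1>0$, which is permanent because the accompanying reset $b_{m-1}\leftarrow 0$ prevents any later iteration from acting on index $m-1$ (and the edge case $m-1 = 1$ is trivially permanent). The main obstacle I anticipate is purely bookkeeping-level: one must be scrupulous about the algorithm's two indexing conventions — the loop variable $m$ runs downward, each non-final entry is visited in two consecutive iterations (once as the "$m$" of its pair, once as the "$m-1$"), and the sign $-(1-\myvar)^2<0$ hinges on $\myvar\neq 1$, which is guaranteed only because \subib{} is checked first. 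Once these conventions are pinned down the argument is a short case analysis with no real computation. I would also remark, for use later in the paper, that \subic{} can execute at most once per run, since after it fires the value $\alpha$ at the top of subsequent iterations comes from a diagonal position that is never again equal to $2-\myvar$ in a way that retriggers it — but this is not needed for the present lemma.
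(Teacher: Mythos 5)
Your proposal is correct and takes essentially the same route as the paper's proof: the entry $d_m=-(1-x)^2$ is negative and immediately final, while $d_{m-1}=1$ is permanent because the reset $b_{m-1}\leftarrow 0$ makes the following iteration do nothing, so that position is never rewritten. Your extra points (that the earlier test for subcase 1b guarantees $x\neq 1$, hence strict negativity, and the trivial case $m=2$) only make explicit what the paper leaves implicit; the one cosmetic slip is calling $b_{m-1}$ the ``lower'' index of the next iteration's pair when it is the higher one, which does not affect the argument.
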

\begin{proof}
The assignment $d_m \leftarrow -(1-x)^2$ produces a negative number. The positive number written occurs with $d_{m-1} \leftarrow 1.$ Normally, assignments to $d_{m-1}$ are overwritten in the next iteration. However, since $b_{m-1} \leftarrow 0,$ the following iteration will leave this entry unchanged.
\end{proof}


Given a graph $G$, we let $n_{+}(G)$ and $n_{-}(G)$
denote respectively the number of positive and negative eigenvalues of $G$,
and $n_{0}(G)$ and $n_{-1}(G)$ denote the multiplicities of $0$ and $-1$.
The triple $( n_{+}(G), n_{0}(G), n_{-}(G) )$
is called the {\em inertia} of $G$.

The following result is due to Bapat \cite{Bapat}.
\begin{Thr}
\label{numberones}
In a connected threshold graph $G$ represented with
$\mathbf{b}$,
$n_{-}(G)$ is the number of $1$'s in
$\mathbf{b}$, 
and $n_{0}(G)$ is the number of substrings $00$ in $\mathbf{b}$.
\end{Thr}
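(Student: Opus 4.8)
The plan is to read the inertia straight off the output of \verb+Diagonalize+ at $\myvar=0$. Applying Theorem~\ref{main1} with $x=0$, the sequence $(d_v)_{v\in G}$ produced by Diagonalize$(G,0)$ gives a diagonal matrix congruent to $A(G)$, so $n_{-}(G)$, $n_{0}(G)$ and $n_{+}(G)$ are exactly the numbers of negative, zero and positive entries of $(d_v)$. First I would note which subcases can occur: since $\myvar=0$, subcase~1b and subcase~2b are impossible (they require $\myvar=1$ and $\myvar\neq 0$), so every iteration runs either subcase~1a or subcase~2a, and subcase~2a always performs $d_m\leftarrow -1$, $d_{m-1}\leftarrow 1$ and leaves $b_{m-1}$ equal to $0$.

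Next I would use the block structure of $\mathbf{b}$. Since $G$ is connected, $b_1=0$ and $b_n=1$, so $\mathbf{b}=0^{a_0}1^{c_1}0^{a_1}1^{c_2}\cdots 0^{a_{k-1}}1^{c_k}$ with $a_0\geq 1$, each $c_j\geq 1$, and $a_i\geq 1$ for $1\leq i\leq k-1$; connectedness is precisely what rules out a trailing run of $0$'s. As \verb+Diagonalize+ runs $m$ from $n$ down to $2$, it handles the maximal blocks of $1$'s one at a time, from the bottom up. Fix such a block, occupying positions $r,r+1,\ldots,s$, so $b_{r-1}=0$ is the bottom entry of the zero-run above it. The iteration $m=s$ enters with $\alpha=d_s=0$, because the only earlier iteration that could have written position $s$ is $m=s+1$, and it did nothing ($b_{s+1}=0$, or else $s=n$). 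From $\alpha=0$, subcase~1a produces the recursion $\alpha\mapsto \frac{-1}{\alpha-2}$, whose iterates are $0,\tfrac12,\tfrac23,\tfrac34,\dots$, all in $[0,1)$; hence $\alpha+\myvar=2$ never occurs, subcase~1c is never triggered inside the block, and each of the iterations $m=s,s-1,\ldots,r+1$ leaves a permanent negative entry $d_m=\alpha-2<0$. The last iteration of the block, $m=r$, has $b_{r-1}=0$, so it runs subcase~2a: it writes one more permanent negative ($d_r=-1$) and sets $d_{r-1}\leftarrow 1$; since $b_{r-1}$ stays $0$, the iteration $m=r-1$ does nothing (the same phenomenon as in Lemma~\ref{1c}), so this $1$ is never overwritten. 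Thus a block of $c=s-r+1$ ones yields exactly $c$ permanent negatives, converts the bottom entry of the zero-run immediately above it into a permanent $1$, and leaves the remaining $a_i-1$ entries of that zero-run equal to $0$.

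Assembling the blocks: each zero-run $0^{a_i}$ ($0\leq i\leq k-1$) contributes one permanent $1$ and $a_i-1$ permanent $0$'s, while each block $1^{c_j}$ contributes $c_j$ permanent negatives. Hence $n_{-}(G)=\sum_{j}c_j$ is the number of $1$'s in $\mathbf{b}$, and $n_{0}(G)=\sum_{i=0}^{k-1}(a_i-1)$, which is exactly the number of adjacent pairs $00$ in $\mathbf{b}$ (a maximal run of $a$ zeros contributes $a-1$ such pairs, and no $00$ pair crosses a $1$); this is the claim. As a consistency check, $n_{+}(G)=n-n_{-}(G)-n_{0}(G)=k$, one positive eigenvalue per zero-run.

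The routine parts are straightforward; I expect the only points needing real care to be the two bookkeeping claims isolated above — that $\alpha$ stays in $[0,1)$ within a block of $1$'s so that subcase~1c never fires, and that the $1$ placed by subcase~2a at the bottom of each zero-run survives to the end — with the latter relying on $b_n=1$ (connectedness) to exclude a trailing block of $0$'s that would otherwise leave extra $0$ entries and break the count.
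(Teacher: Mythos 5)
Your argument is correct, but it cannot be "the same as the paper's proof" because the paper offers none: Theorem~\ref{numberones} is imported verbatim from Bapat's article \cite{Bapat}, which establishes the inertia by working directly with the adjacency matrix of a threshold graph (congruence/rank manipulations tied to the degree structure), not via the diagonalization algorithm. What you do instead is rederive the result inside the paper's own toolkit: run \verb+Diagonalize+$(G,0)$, invoke Theorem~\ref{main1} with $x=0$, and track the subcases. Your bookkeeping is sound at every point I checked: with $\myvar=0$ subcases 1b and 2b are excluded, so $\mathbf{b}$ is never modified during the run; inside a maximal block of $1$'s the $\alpha$-values follow $\alpha\mapsto 1/(2-\alpha)$ starting from $0$ and stay in $[0,1)$, so $\alpha=2$ never occurs and subcase 1c is never triggered; each of the $c$ iterations in a block deposits a permanent negative ($\alpha-2$ or the $-1$ from subcase 2a); the $1$ written at the bottom entry of the preceding zero-run survives because that entry's own iteration has $b_m=0$ and does nothing, and no later iteration reaches it; and interior entries of zero-runs are never written, giving $a_i-1$ zeros per run, i.e.\ exactly the number of $00$ substrings (connectedness, $b_n=1$, is what prevents a trailing zero-run). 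So the proposal is a correct, self-contained proof by a genuinely different route: it buys consistency with the paper's machinery (and, as you note, also yields $n_{+}(G)=$ number of zero-runs, which is Theorem~\ref{numberzeroone}), at the cost of re-verifying algorithmic details that Bapat's matrix-theoretic proof avoids.
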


Note that in the creation sequence of a connected threshold graph,
every zero must be followed by a zero or one.
So $u$ the number of zeros,
equals $u_{00}$ the number of substrings $00$,
plus $u_{01}$ the number of substrings $01$.
If $v$ is the number of ones in the sequence,
$n = v + u = v + u_{00} + u_{01}$.
Therefore,
$n_{+}(G) = n - n_{-}(G) - n_{0}(G) = n - v - u_{00} = u_{01}$.
That is,
\begin{Thr}
\label{numberzeroone}
In a connected threshold graph $G$,
the number of occurrences of the substring $01$
in its creation sequence equals $n_{+}(G),$ and $n_{-1}(G)$ is the number of substrings $11$ in $\mathbf{b}$. 
\end{Thr}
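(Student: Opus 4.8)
The first assertion merely records the count already carried out above. In a connected threshold graph $b_1=0$ and $b_n=1$, so no $0$ occupies the last position and every $0$ is immediately followed by a $0$ or a $1$; hence $u=u_{00}+u_{01}$, where $u$ is the number of zeros and $u_{00},u_{01}$ are the numbers of substrings $00$ and $01$. Writing $v$ for the number of ones, $n=u+v$. By Theorem~\ref{numberones}, $n_{-}(G)=v$ and $n_{0}(G)=u_{00}$, and since every eigenvalue of $A(G)$ is positive, zero, or negative, $n_{+}(G)=n-n_{-}(G)-n_{0}(G)=n-v-u_{00}=u-u_{00}=u_{01}$.

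For the second assertion I would invoke Theorem~\ref{main1} with $x=-1$: the diagonal produced by $\mathrm{Diagonalize}(G,1)$ is congruent to $A(G)+I$, so $n_{-1}(G)$ is the number of zeros appearing on that diagonal, and the goal becomes to prove this number equals $u_{11}$. Running the algorithm with $\myvar=1$ collapses the branching: subcase~2a cannot occur (it requires $\myvar=0$) and subcase~1c cannot occur (it requires $\alpha+\myvar=2$ together with $\myvar\neq1$), whereas subcase~1a (where necessarily $\alpha\neq1$) sets $d_{m-1}\leftarrow\frac{\alpha-1}{\alpha-1}=1$, $d_m\leftarrow\alpha-1$, subcase~1b sets $d_{m-1}\leftarrow1$, $d_m\leftarrow0$, and subcase~2b sets $d_{m-1}\leftarrow\alpha-1$, $d_m\leftarrow1$, $b_{m-1}\leftarrow1$. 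Therefore a zero is written into a final slot $d_m$ exactly at a subcase~1b step (no subsequent iteration touches index $m$), and every other entry $d_{m-1}$ written during the run is overwritten at the next iteration; the sole remaining candidate for a surviving zero is the top entry $d_1$, assigned at iteration $m=2$. The plan is then to count the subcase~1b steps over the whole run, together with this boundary contribution.

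The obstacle is that $\mathbf b$ is itself modified as the algorithm runs: a subcase~2b step turns a current $01$ into a current $11$, so the $11$-pairs encountered during the run are not those of the original creation sequence, and whether a case~$1$ step falls into subcase~1b (leaving $0$) or into subcase~1a (leaving $\alpha-1$) depends on the value $d_m$ inherited from the previous iteration. I expect the heart of the proof to be a bookkeeping lemma that determines, for each index $m$, the joint state $(b_m,d_m)$ at the start of iteration $m$ from the original sequence, and extracts from it that exactly $u_{11}$ zeros are produced over the run — the delicate case being the left end, namely the value finally assigned to $d_1$ at iteration $m=2$. With that state-tracking in hand, Theorem~\ref{main1} converts the zero-count into $n_{-1}(G)$. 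A more conceptual alternative would avoid the algorithm altogether: for a threshold graph the $(-1)$-eigenspace is spanned by the differences $e_i-e_j$ over pairs of true twins, so $n_{-1}(G)=\sum_T(|T|-1)$ over true-twin classes $T$; in a connected threshold graph these classes are precisely the maximal blocks of consecutive dominating vertices, once the status of the first vertex is accounted for.
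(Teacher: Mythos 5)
Your handling of the first assertion is precisely the paper's own argument: the paper obtains $n_{+}(G)=u_{01}$ from Theorem~\ref{numberones} by the same count ($u=u_{00}+u_{01}$, $n=u+v$, $n_{+}=n-v-u_{00}$) in the paragraph immediately preceding the statement, so that half of your proposal is correct and identical in approach. The gap is the second assertion. What you give for $n_{-1}(G)=u_{11}$ is a plan, not a proof: you correctly observe that subcase~2b rewrites $b_{m-1}$, so the pairs $11$ seen by \texttt{Diagonalize}$(G,1)$ are not the substrings $11$ of the original creation sequence, but the ``bookkeeping lemma'' that would resolve this is never supplied; and your alternative route rests on the unproved claim that the $(-1)$-eigenspace of a threshold graph is spanned by differences of true twins. (The paper, for its part, offers no argument for this clause either --- it is simply appended after the $u_{01}$ computation --- so you are not missing a published proof, but as written your proposal leaves the second claim unestablished.)

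More importantly, your hedge about ``the status of the first vertex'' is exactly where the statement, under the paper's convention $b_1=0$, actually breaks: if $b_2=1$, then $N[v_1]=N[v_2]$, the first two vertices are true twins, and $n_{-1}(G)$ exceeds the number of substrings $11$ in $\mathbf{b}$ by one. Concretely, $K_2=(0,1)$ has $n_{-1}=1$ and no substring $11$; likewise $A_4=(0,1,0,1)$ has $n_{-1}=1$, and the paper itself records $\lambda_k(A_{2k})=-1$ even though $A_{2k}=(0101\ldots01)$ contains no $11$. The count that the twin-class (or a completed algorithmic) argument actually produces is $n_{-1}(G)=u_{11}+1$ when $b_2=1$ and $n_{-1}(G)=u_{11}$ otherwise, i.e.\ the first vertex must be treated as dominating whenever it is adjacent to the second. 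So a completed version of your argument would have to correct the statement rather than confirm it; as it stands, the second half of your proposal neither proves the claim nor detects that it fails as literally stated.
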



\section{Basic Results}
\label{main}

Throughout  this section we let $G$ be  a connected threshold graph of order $n \geq 3,$ whose $\lambda(G)$ is a simple eigenvalue  $\lambda(G) \neq -1,0.$

\begin{Lem}
\label{lema2}
If $G$ is a threshold graph on $n$ vertices with  $x= -\lambda(G) ,$ then \verb+Diagonalize+$(G, x)$ 
produces a zero at the top of the diagonal.
\label{alphaleqone}
\end{Lem}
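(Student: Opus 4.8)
The plan is to run Diagonalize$(G,x)$ with $x=-\lambda(G)$ and track what happens to the top diagonal entry $d_1$, using Theorem~\ref{main1} as the bookkeeping device. By Theorem~\ref{main1}, applied at the scalar $\lambda=\lambda(G)$, the diagonal sequence $(d_v)$ produced by Diagonalize$(G,-(-\lambda))=$Diagonalize$(G,x)$\,---\,wait, I must be careful with signs: Theorem~\ref{main1} says Diagonalize$(G,-x)$ gives a matrix congruent to $A(G)-xI$, so feeding the algorithm the argument $x=-\lambda(G)$ produces a matrix congruent to $A(G)-\lambda(G)I$. Since $\lambda(G)$ is an eigenvalue, this matrix is singular, so at least one $d_v$ must be $0$. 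The content of the lemma is that the zero appears specifically at the top, i.e. $d_1=0$, and (implicitly) that it appears only there.

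First I would observe that a permanent zero can only be created by subcase~1b or subcase~2a, or by a final assignment $d_{m-1}\leftarrow$ (something that evaluates to $0$) on the last pass $m=2$. Subcase~1b requires the algorithm's current scalar to equal $1$, i.e. $x=1$, meaning $\lambda(G)=-1$, which is excluded by hypothesis; subcase~2a requires $x=0$, i.e. $\lambda(G)=0$, also excluded. (One must check the scalar the algorithm carries does not change\,---\,it doesn't; only the $b$-values and the $d$-values are updated, the scalar $x$ is fixed throughout.) So no permanent zero can be produced in the interior of the run, and the singularity of $D$ forces the zero to be produced on the last pass, as the top entry $d_1$.

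The remaining point is to rule out that the zero at the top is ``accidental'' in a way that hides a second zero, and more importantly to confirm that the last pass genuinely does write $0$ at position $1$ rather than, say, terminating in subcase~1c (which writes a negative number at $d_m$ and a permanent $1$ at $d_{m-1}$) without ever touching $d_1$. Here I would argue that if $b_1=0$ (which we may always assume by the creation-sequence convention) then on the last pass $m=2$ we are in one of the $b_{m-1}=0$ branches, hence subcase~2a or subcase~2b; 2a is excluded as above, so it is 2b, whose assignment is $d_1\leftarrow d_2-\tfrac{1}{x}$. Since the whole matrix is singular and all interior entries are nonzero (by the previous paragraph, Lemma~\ref{1c} accounting for the 1c executions which leave a nonzero pair), congruence and Theorem~\ref{main1} force exactly one zero on the diagonal, and it must be this $d_1$.

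I expect the main obstacle to be the careful case analysis showing \emph{no interior permanent zero} can occur: one has to be sure that across all five subcases, with $x\neq 0$ and $x\neq\pm 1$ not both available as escape hatches (only $x=1$ and $x=0$ are excluded, while $x=-1$ corresponds to $\lambda(G)=1$ which is \emph{not} excluded), the only way to get a $0$ on the diagonal is the terminal subcase~2b assignment collapsing to $0$. In particular subcase~1c with $x$ such that $-(1-x)^2$... is always strictly negative, so that's fine; subcase~1a writes $\alpha+x-2$ which is nonzero precisely because we are not in 1c, and writes $\tfrac{\alpha x-1}{\alpha+x-2}$ at $d_{m-1}$ which could a priori be zero if $\alpha x=1$\,---\,but that entry is overwritten next pass unless $b_{m-1}$ was just set to $0$, which 1a does not do. Pinning down this last wrinkle\,---\,that every transiently-zero $d_{m-1}$ gets overwritten\,---\,is the delicate part, and it is exactly the mechanism already isolated in the proof of Lemma~\ref{1c}.
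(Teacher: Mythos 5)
Your proposal is correct and takes essentially the same route as the paper: by Theorem~\ref{main1} the singularity of $A(G)-\lambda(G)I$ forces a zero on the diagonal, and since $x\neq 0,1$ rules out subcases 1b and 2a, an inspection of the algorithm (which the paper leaves implicit and you carry out explicitly, including the transient-zero/overwriting wrinkle) shows a zero can only be written on the final pass, at the top.
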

\begin{proof}
Since $-x$ is an eigenvalue, by Theorem \ref{main1} we must obtain a zero on the diagonal. 
  An inspection of the algorithm shows that since $x \neq 0,1,$ a zero can be written only during the algorithm's last iteration, to the top of the diagonal.
\end{proof}

\begin{Lem}
\label{lema3}
Let $G$ and $H$ be two  threshold graphs on $n$ vertices with  their respective eigenvalues $\lambda(G),\lambda(H)\neq -1,0.$
\begin{my_description}
\item[i]
If the number of negative entries in \verb+Diagonalize+$(H, x)$ exceeds the number of negative entries in \verb+Diagonalize+$(G, x)$ by one, where $x=-\lambda(G),$
and $\lambda(G), \lambda(H)< -1$  then $\lambda(H) < \lambda(G)$ 
\item[ii]If the number of  positive entries in \verb+Diagonalize+$(H, x)$ exceeds the number of positive entries in \verb+Diagonalize+$(G, x)$ by one, where $x=-\lambda(G),$
and $0< \lambda(G), \lambda(H)$  then $\lambda(G) < \lambda(H).$ 
\end{my_description}
\end{Lem}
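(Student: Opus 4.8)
The plan is to exploit the sign-counting interpretation of Diagonalize coming from Theorem~\ref{main1} together with a continuity/monotonicity argument in the scalar parameter. Fix $x = -\lambda(G)$. By Theorem~\ref{main1}, the number of negative entries produced by \verb+Diagonalize+$(G,x)$ equals the number of eigenvalues of $A(G)$ strictly less than $-x = \lambda(G)$; since $\lambda(G)$ is itself an eigenvalue, Lemma~\ref{lema2} tells us there is exactly one zero on the diagonal, so the negative count is precisely $n_{-}(G)$ minus (the number of eigenvalues equal to $\lambda(G)$), i.e.\ since $\lambda(G)$ is simple, the negative count is $n_{-}(G)-1 \cdot[\lambda(G)\text{ counted in }n_-]$; more usefully, it equals $|\{\mu \in \mathrm{spec}(A(G)) : \mu < \lambda(G)\}|$. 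Likewise for $H$: the negative count of \verb+Diagonalize+$(H,x)$ equals $|\{\mu \in \mathrm{spec}(A(H)) : \mu < \lambda(G)\}|$ (there may or may not be a zero, depending on whether $\lambda(G)$ is an eigenvalue of $H$, but the negative count is unaffected by that). So the hypothesis in part~(i) says
\[
 |\{\mu \in \mathrm{spec}(A(H)) : \mu < \lambda(G)\}| \;=\; |\{\mu \in \mathrm{spec}(A(G)) : \mu < \lambda(G)\}| + 1 .
\]

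Now I would argue about the location of $\lambda(H)$, which is the largest eigenvalue of $A(H)$ that is less than $-1$ (this is the intended reading of ``$\lambda(H)<-1$'' in the lemma's running hypotheses: $\lambda(H)$ is the distinguished eigenvalue of $H$ in $(-\infty,-1)$ closest to $-1$, and similarly $\lambda(G)$ for $G$). Since $\lambda(G) < -1$ as well, and since by \cite{JTT2015} no threshold graph has eigenvalues in $(-1,0)$, the eigenvalues of $H$ in $(-\infty,-1)$ are exactly $n_-(H)-n_{-1}(H)$ of them (the non-$(-1)$ negative ones), listed as $\lambda_1(H) \le \cdots < -1$. The key step is: the extra negative entry means that, compared to $G$, the graph $H$ has exactly one more eigenvalue below the threshold $\lambda(G)$. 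Because $\lambda(G)$ is an eigenvalue of $G$ and it is the relevant distinguished one, ``eigenvalues of $G$ below $\lambda(G)$'' is ``eigenvalues of $G$ strictly below the distinguished one''; the count jumping up by one for $H$ forces $\lambda(G)$ to lie strictly between the $k$-th and $(k+1)$-th eigenvalues of $H$ in a way that places $\lambda(H)$ — the relevant distinguished eigenvalue of $H$ — strictly to the left of $\lambda(G)$. Concretely, I would set $k = |\{\mu \in \mathrm{spec}(A(G)): \mu<\lambda(G)\}|$, so $\lambda(G)$ is the $(k+1)$-st smallest eigenvalue of $G$; the hypothesis says $\lambda(G)$ exceeds the $(k+1)$-st smallest eigenvalue of $H$. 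Since threshold graphs on $n$ vertices all have $n_-$ equal to the number of $1$'s and the same parity structure forces the distinguished sub-$(-1)$ eigenvalue of $H$ to be exactly that $(k+1)$-st one (here one uses that both $G$ and $H$ are connected threshold graphs with eigenvalue $<-1$, so the relevant index counting from the bottom is the same), we conclude $\lambda(H) < \lambda(G)$. Part~(ii) is the mirror image: replace ``negative'' by ``positive'', ``below'' by ``above'', and use that for connected threshold graphs the smallest positive eigenvalue is the distinguished one, with $n_+$ equal to the number of $01$ substrings by Theorem~\ref{numberzeroone}; an extra positive entry for $H$ means one more eigenvalue of $H$ lies strictly above $\lambda(G)$, which pushes the smallest positive eigenvalue of $H$ strictly to the right of $\lambda(G)$, giving $\lambda(G) < \lambda(H)$.

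The main obstacle I anticipate is bookkeeping the indexing correctly — specifically, verifying that the distinguished eigenvalue $\lambda(H)$ (largest below $-1$ in part~(i), smallest positive in part~(ii)) really does occupy the index position that the sign-count forces, rather than some other eigenvalue. This requires knowing that $n_-(H)$, $n_{-1}(H)$, $n_0(H)$ (resp.\ $n_+(H)$, $n_0(H)$) are pinned down enough by the creation-sequence combinatorics (Theorems~\ref{numberones} and~\ref{numberzeroone}) and by the no-eigenvalues-in-$(-1,0)$ fact that the ordering of eigenvalues around the threshold $\lambda(G)$ unambiguously identifies which eigenvalue of $H$ is $\lambda(H)$. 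Once that indexing is nailed down, the conclusion $\lambda(H)<\lambda(G)$ (resp.\ $\lambda(G)<\lambda(H)$) is immediate from the inequality between the $(k+1)$-st eigenvalues. I would structure the write-up as: (1) translate both hypotheses into eigenvalue-counting statements via Theorem~\ref{main1} and Lemma~\ref{lema2}; (2) fix the index $k$ from the $G$-count; (3) use the combinatorial inertia results plus $(-1,0)$-gap to identify $\lambda(G)$ and $\lambda(H)$ with specific order statistics; (4) read off the strict inequality.
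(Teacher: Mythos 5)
Your proposal follows essentially the same route as the paper: apply Theorem \ref{main1} at $x=-\lambda(G)$ to convert the extra negative (resp.\ positive) diagonal entry for $H$ into one extra eigenvalue of $H$ below (resp.\ above) $\lambda(G)$, and then read off $\lambda(H)<\lambda(G)$ (resp.\ $\lambda(G)<\lambda(H)$). The indexing point you flag---identifying the distinguished eigenvalue of $H$ with the correct order statistic---is precisely the step the paper's own proof also leaves implicit; note only that it is settled not by connectedness alone but by the fact that, in the pairs to which the lemma is applied, $G$ and $H$ have the same number of eigenvalues below $-1$ (resp.\ above $0$), which follows from the substring counts of Theorems \ref{numberones} and \ref{numberzeroone} and is the equality invoked later, e.g.\ in the proof of Lemma \ref{lemamax}.
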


\begin{proof} We check  item $(i).$
Since that the number of negative entries in \verb+Diagonalize+$(G, x),$ where $x=-\lambda(G)$ corresponds to the number of eigenvalues of $G$ that are small than $\lambda(G),$
by Theorem \ref{main1}, and  \verb+Diagonalize+$(H, x)$ exceeds this number by one, then the largest eigenvalue
of $H$ less than $-1$ is smaller than $\lambda(G),$ that is, $\lambda(H) < \lambda(G).$ The item $(ii)$ is similar.
\end{proof}


During diagonalization $(G, x),$ where $x = -\lambda(G),$ it is impossible for the algorithm
to enter \subib~ or  \subiia~    because $x \neq 0,1.$    
As we will show in the next section
\verb+Diagonalize+$(G, x)$ does not execute the \subic. Then it must enter \subia~ or \subiib~ initially, and remain in one of these two subcases.
The key to solve our problem is to understand the behavior of the following functions:
\begin{equation}
\label{g}
 g(\alpha)= \frac{\alpha x -1}{\alpha + x -2}
\end{equation}
\begin{equation}
\label{f}
 f(\alpha)=  \alpha - \frac{1}{x}
\end{equation}
These functions, of course, are used in  \subia~ and \subiib, respectively.
We regard $x$ as fixed and $\alpha$ is an indetermined.

During the execution  of \verb+ Diagonalize+ $(G,x),$ there is a  sequence  of $n$ values  calculated right to left
\begin{equation}
{\bf \alpha_{G,x}} = (\alpha_1, \alpha_2, \ldots
\alpha_{n-1},\alpha_{n}=x)
\end{equation}
that are  temporarily  assigned to the diagonal, we call the  $\alpha$-sequence.  Except for the  final value
$\alpha_1,$  each gets overwritten.  They are computed:
$$ \alpha_{i-1} = h_{i}(\alpha_i),\hspace{0,5cm} 2 \leq i \leq n$$
where
$$h_{i}(\alpha) =  \left\{\begin{array}{ccc}
g(\alpha) ,& \mbox{ se}  \hspace{0,25cm}  {b_{i-1} =1}  &\\
f(\alpha)  ,& \mbox{ se}  \hspace{0,25cm}  {b_{i-1} =0}  \\
\end{array}\right.$$
and $g$ and $f$ are defined in (\ref{g}) and (\ref{f}). As compositions we have 

\begin{equation}
\label{cinco}
\alpha_1 = h_{2}\circ h_{3}\circ \ldots \circ h_{i}(\alpha_i)
\end{equation}

\noindent  for $2\leq i \leq n.$ 
The sequence of $h_i $ depends only on the original $b_i.$

The Figure \ref{figure3} illustrates the functions $f$ and $g$ for $x=4.$

\begin{figure}[h!]
       \epsfig{file=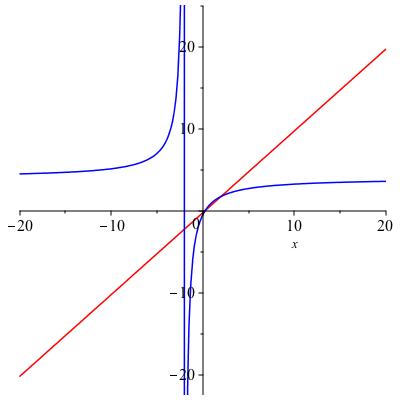, height=6cm, width=7cm}
\caption{ The functions $f$ and $g$}
\label{figure3}
       \end{figure}

\begin{Lem}
\label{lema4}
Both  $f$ and $g$ are  continuous and increasing on $(2-x, +\infty) .$
\end{Lem}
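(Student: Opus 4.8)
The plan is to handle $f$ and $g$ separately; for $f$ the claim is immediate, and for $g$ the cleanest route is a partial-fraction rewriting (the derivative computation works equally well as a back-up). Recall that here $x$ is a fixed scalar with $x = -\lambda(G)$, so by the standing hypothesis of this section $x \neq 0$ and $x \neq 1$.

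For $f$: since $x \neq 0$, the quantity $\tfrac1x$ is a well-defined constant, so $f(\alpha) = \alpha - \tfrac1x$ is an affine function of $\alpha$ with slope $1$. It is therefore continuous on all of $\boldR$ and strictly increasing, in particular on $(2-x, +\infty)$.

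For $g$: first rewrite
\[
 g(\alpha) = \frac{\alpha x - 1}{\alpha + x - 2} = x - \frac{(x-1)^2}{\alpha + x - 2},
\]
which one checks by clearing denominators ($x(\alpha+x-2) - (x-1)^2 = \alpha x - 1$). The denominator $\alpha + x - 2$ vanishes only at $\alpha = 2 - x$, so $g$ is defined and continuous on $(2-x, +\infty)$. On that interval $\alpha + x - 2 > 0$ and is strictly increasing in $\alpha$, hence $\dfrac{(x-1)^2}{\alpha + x - 2}$ is nonnegative and strictly decreasing (using $(x-1)^2 > 0$, as $x \neq 1$), and therefore $g(\alpha) = x - \dfrac{(x-1)^2}{\alpha + x - 2}$ is strictly increasing there. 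Equivalently, the quotient rule gives $g'(\alpha) = \dfrac{(x-1)^2}{(\alpha + x - 2)^2} > 0$ on $(2-x,+\infty)$.

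There is essentially no obstacle in this lemma; the only point worth flagging is the role of the hypothesis $x \neq 1$ (that is, $\lambda(G) \neq -1$, which is in force throughout this section): it is exactly what prevents $g$ from degenerating to the constant function $1$, and it is the same quantity $(x-1)^2 = (1-x)^2$ that appears as the permanent negative diagonal entry produced by \subic~ in Lemma~\ref{1c}.
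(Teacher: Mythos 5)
Your proof is correct and takes essentially the same route as the paper, whose entire argument is that the derivatives $\frac{df}{d\alpha}=1$ and $\frac{dg}{d\alpha}=\frac{(x-1)^2}{(\alpha+x-2)^2}$ are positive. Your partial-fraction rewriting of $g$ and your explicit remark that $x\neq 1$ (so $(x-1)^2>0$) is what guarantees strict monotonicity are harmless refinements of that same computation.
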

\begin{proof}
 Their derivatives $\frac{df}{d\alpha}= 1$ and
$\frac{dg}{d\alpha}= \frac{(x-1)^2}{(\alpha +x -2)^2}$  are
positive.
\end{proof}

\begin{Lem}
\label{lema5}
The following properties hold for $f$ and $g:$
\begin{my_description}
\item[i]
$f(\alpha) = 0$ if and only if $\alpha = \frac{1}{x}.$ 
\item[ii]
If $  \alpha < \frac{1}{x} , x>0$  and $\alpha +x -2 <0$ then $ f(\alpha)< 0$ and $g(\alpha) >0.$
\item[iii]
If  $0<  \frac{1}{x} < \alpha < 2 $ and $\alpha +x -2 > 0$ then $ f(\alpha) < g(\alpha).$
\item[iv]
If  $ \alpha > 2 $ then $ g(\alpha) < f(\alpha).$
\end{my_description}
\end{Lem}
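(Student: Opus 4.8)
The plan is to verify each of the four items by direct computation, keeping careful track of the signs of the quantities $x$, $\alpha+x-2$, and the relevant numerators. For item (i), I would simply solve $f(\alpha) = \alpha - \tfrac1x = 0$, which gives $\alpha = \tfrac1x$ immediately. For item (ii), under the hypotheses $\alpha < \tfrac1x$, $x>0$, and $\alpha+x-2<0$: the claim $f(\alpha)<0$ follows at once from $\alpha < \tfrac1x$. For $g(\alpha)>0$, I would write $g(\alpha) = \frac{\alpha x - 1}{\alpha + x - 2}$ and observe that the hypothesis $\alpha < \tfrac1x$ with $x>0$ gives $\alpha x - 1 < 0$, while $\alpha + x - 2 < 0$ by assumption; a negative over a negative is positive.

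For items (iii) and (iv), the natural move is to study the difference
\begin{equation}
f(\alpha) - g(\alpha) = \alpha - \frac1x - \frac{\alpha x - 1}{\alpha + x - 2}.
\end{equation}
Putting this over a common denominator $x(\alpha+x-2)$, the numerator becomes $x\alpha(\alpha+x-2) - (\alpha+x-2) - x(\alpha x - 1)$, which I expect to simplify (after cancellation of the $x^2\alpha$ and $-x\alpha$ terms) to something of the form $x\alpha^2 - 2x\alpha - \alpha - x + 2 + x = \alpha\bigl(x\alpha - 2x - 1\bigr) + (2 - x)$, and then further to a factored expression — my guess is $(\alpha - 2)(x\alpha - 1)$ up to sign, since the zeros of $f-g$ should be exactly $\alpha = 2$ (where $g$ has its asymptote-crossing value $g(2) = \frac{2x-1}{x} = 2 - \tfrac1x = f(2)$) and $\alpha = \tfrac1x$ (where both vanish by items (i) and (ii)). Granting the factorization $f(\alpha) - g(\alpha) = \dfrac{(\alpha-2)(x\alpha-1)}{x(\alpha+x-2)}$, item (iii) follows because under $0 < \tfrac1x < \alpha < 2$ and $\alpha+x-2>0$ we get $\alpha - 2 < 0$, $x\alpha - 1 > 0$, $x > 0$, and $\alpha+x-2>0$, so the quotient is negative, i.e. $f(\alpha)<g(\alpha)$. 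Item (iv) follows because $\alpha > 2$ forces $\alpha - 2 > 0$, $x\alpha - 1 > 0$ (as $x>0$), and $\alpha + x - 2 > 0$, so the quotient is positive, giving $g(\alpha) < f(\alpha)$.

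The only real obstacle is the algebraic simplification leading to the factored form of $f(\alpha)-g(\alpha)$; once that identity is in hand, all four items reduce to inspecting the sign of a product of linear factors under the stated hypotheses. I would double-check the factorization by evaluating both sides at $\alpha = 0$ and $\alpha = 1$ (or by confirming the two roots $\alpha = 2$ and $\alpha = 1/x$ directly), and I would be careful to state explicitly in each item which hypothesis pins down the sign of $\alpha + x - 2$, since that denominator changes sign at $\alpha = 2-x$, exactly the left endpoint of the interval in Lemma~\ref{lema4}.
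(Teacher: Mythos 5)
Your proposal is correct and takes essentially the same elementary approach as the paper: the paper proves items (iii)--(iv) by writing $f(\alpha)=\frac{\alpha x-1}{x}$ and comparing it with $g(\alpha)=\frac{\alpha x-1}{\alpha+x-2}$ through their common numerator, which is just a repackaging of your factorization $f(\alpha)-g(\alpha)=\frac{(\alpha-2)(x\alpha-1)}{x(\alpha+x-2)}$, and that factorization does check out. The one point to watch is item (iv), where your step ``$x\alpha-1>0$ as $x>0$'' literally requires $x\geq \tfrac12$ (for $0<x<\tfrac12$ and $\alpha>2$ the claimed inequality can fail, e.g.\ $x=0.1$, $\alpha=3$); however, the lemma as stated and the paper's own ``similar argument'' carry the same tacit assumption, and in every application $x>1$, so this is not a defect peculiar to your proof.
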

\begin{proof}
Properties {\bf i} and {\bf ii} are easily verified. To see {\bf iii}, we note that
 $f(\alpha) < g(\alpha) $ we must show that
$$ \alpha - \frac{1}{x} = \frac{\alpha x -1 }{x} < \frac{\alpha x -1}{\alpha + x -2},$$
which is equivalent to $\frac{1}{x} < \frac{1}{\alpha +x -2},$ which holds since $\alpha +x -2 < x.$ 
Using similar argument we prove the item {\bf iv.}
\end{proof}

\begin{Lem}
\label{lema6} Let $H$ be a connected threshold graph obtained from $G$ by changing a single $b_l, 1 < l < n,$ from $1$ to $0,$ and consider the execution of \verb+Diagonalize+$(H,x).$ 
\begin{my_description}
\item[i]
If $ \frac{1}{x} < \alpha_{l+1} < 2,$ for $x>0,$ and $h_{l+1}= g$  is replaced by $f,$ then    $\alpha'_1$  will decrease.
\item[ii] If $  \alpha_{l+1} < \frac{1}{x},$ for $x>0,$ and $h_{l+1}= g$ is replaced by $f,$ then $\alpha'_1$ will decrease. 
\item[iii] If $  \frac{1}{x} < \alpha_{l+1} ,$ for $x<0,$ and $h_{l+1}= g$ is replaced by $f,$ then $\alpha'_1$ will increase. 

\end{my_description}

\end{Lem}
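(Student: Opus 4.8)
The key observation is that changing $b_l$ from $1$ to $0$ replaces $h_{l+1}=g$ by $h_{l+1}=f$ in the composition (\ref{cinco}) while leaving every other $h_i$ unchanged; all the $\alpha_i$ for $i \ge l+1$ are therefore unaffected, and in particular $\alpha_{l+1}$ is the same for $G$ and for $H$. So the argument splits into two stages: first, compare $\alpha'_l = f(\alpha_{l+1})$ with $\alpha_l = g(\alpha_{l+1})$; second, propagate that inequality up through $h_l, h_{l-1}, \ldots, h_2$ to conclude about $\alpha'_1$ versus $\alpha_1$. For the first stage I would invoke Lemma~\ref{lema5}: in item (i) the hypothesis $\tfrac1x < \alpha_{l+1} < 2$ together with $\alpha_{l+1}+x-2>0$ (which should be noted to hold here, since we are in the regime where \subia~ was being executed) gives $f(\alpha_{l+1}) < g(\alpha_{l+1})$, i.e. $\alpha'_l < \alpha_l$; in item (ii), Lemma~\ref{lema5}(ii) gives $f(\alpha_{l+1}) < 0 < g(\alpha_{l+1})$, so again $\alpha'_l < \alpha_l$; in item (iii) with $x<0$ one checks directly (a one-line computation analogous to Lemma~\ref{lema5}(iii), with the inequality reversed because $x<0$) that $f(\alpha_{l+1}) > g(\alpha_{l+1})$, so $\alpha'_l > \alpha_l$.

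For the second stage I would argue by monotonicity. By Lemma~\ref{lema4}, each $h_i$ ($g$ or $f$) is increasing on $(2-x,+\infty)$, so an increasing chain of compositions preserves the order between $\alpha'_l$ and $\alpha_l$: if $\alpha'_l < \alpha_l$ then $\alpha'_1 = h_2\circ\cdots\circ h_l(\alpha'_l) < h_2\circ\cdots\circ h_l(\alpha_l) = \alpha_1$, and similarly for the reversed inequality in item (iii). The point that needs care is that all the intermediate values $\alpha'_{l-1}, \ldots, \alpha'_2$ stay inside the common domain $(2-x,+\infty)$ where both $f$ and $g$ are defined and monotone — otherwise one cannot legitimately apply Lemma~\ref{lema4} at each step. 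I expect this domain-tracking to be the main technical obstacle: one must verify that because $H$ is still a connected threshold graph with $\lambda(H)\neq -1,0$, the execution of \verb+Diagonalize+$(H,x)$ never enters \subic~ and hence the $\alpha'$-sequence lives in the good region, so that the monotone-propagation argument is valid all the way to the top.

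A secondary subtlety is the sign conventions: in items (i) and (ii) we have $x>0$, which in the paper's setup corresponds to $\lambda(G)<0$ being negated, while item (iii) has $x<0$; I would state explicitly at the outset which regime of $\alpha_{l+1}+x-2$ we are in, since the relevant inequalities from Lemma~\ref{lema5} all carry that sign hypothesis, and then simply cite Lemmas~\ref{lema4} and~\ref{lema5} rather than recomputing. The whole proof should then be short: one application of Lemma~\ref{lema5} at level $l$, one invocation of Lemma~\ref{lema4} to push the inequality up, and a remark that the $\alpha'$-sequence remains in the domain because no singular subcase is triggered.
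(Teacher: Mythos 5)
Your proposal is correct and follows essentially the same route as the paper: compare $f(\alpha_{l+1})$ with $g(\alpha_{l+1})$ via Lemma~\ref{lema5} (with a direct sign check for the $x<0$ case), then push the inequality to $\alpha'_1$ by the monotonicity of the composition from Lemma~\ref{lema4}, under the standing assumption that \subic~ is not triggered. The paper likewise handles the domain issue simply by assuming \subic~ is avoided, so your extra domain-tracking remark is a (welcome) elaboration rather than a divergence.
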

\begin{proof}
We prove item $(i).$ Assuming \subic~ is avoided, the new alpha sequence
\begin{equation}
\label{seven}
\alpha_{H,x} = (\alpha'_1, \ldots,\alpha'_l, \alpha_{l+1},
\ldots, \alpha_k = x)
\end{equation}
is computed  exactly  the same, except  $h_{l+1}$ will
change from  $g$  to $f.$ Since $  \frac{1}{x} < \alpha_{l+1} < 2,$ by Lemma \ref{lema5} (part {\bf iii})
$ \alpha_{l} = f(\alpha_{l+1}) <  g(\alpha_{l+1}) .$
Let  $h = h_2 \circ h_3 \circ \ldots \circ h_l $ be the remaining composition in (\ref{cinco}). By Lemma \ref{lema4}, each $h_i$ is continuous and increasing on   $( 2-x,+\infty) ,$ so the composition must be continuous and increasing, and we have: $ \alpha'_{1} = h(\alpha'_{k}) <  h(\alpha_{k}) = \alpha_{1}.$ The proof is similar for items $(ii)$ and $(iii).$
\end{proof}


Analogously, the following result can be verified.

\begin{Lem}
\label{lema7}
Let $H$ denote the connected threshold graph obtained from $G$ by changing a single $b_l, 1 < l < k,$ from $0$ to $1.$ Consider the execution of \verb+Diagonalize+$(H,x),$
\begin{my_description}
\item[i]  If $ \alpha_{l+1} > 2,$ for $x>0$ and $h_{l+1}= f$  is replaced by $g,$ then    $\alpha'_1$  will decrease.
  
\item[ii] If $ \alpha_{l+1}< \frac{1}{x},$ for $x<0$ and $h_{l+1}= f$  is replaced by $g,$ then    $\alpha'_1$  will increase. 
\end{my_description}

\end{Lem}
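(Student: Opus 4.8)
The plan is to run the argument of Lemma~\ref{lema6} in reverse, with the roles of $f$ and $g$ interchanged. Changing the single bit $b_l$ from $0$ to $1$ alters the sequence of maps $(h_i)$ in exactly one place, namely $h_{l+1}$ (the only one depending on $b_l$), turning it from $f$ into $g$; every other $h_i$ is untouched. Hence, provided \subic~is avoided (and \subib~and \subiia~cannot occur at all, since $x\neq 0,1$), the alpha-sequence of $H$,
\begin{equation*}
\alpha_{H,x}=(\alpha'_1,\ldots,\alpha'_l,\alpha_{l+1},\ldots,\alpha_k=x),
\end{equation*}
has the same tail $\alpha_{l+1},\ldots,\alpha_k$ as $\alpha_{G,x}$, the value $\alpha_l=f(\alpha_{l+1})$ is replaced by $\alpha'_l=g(\alpha_{l+1})$, and the common remaining composition $h:=h_2\circ\cdots\circ h_l$ of~(\ref{cinco}) is applied to both, so that $\alpha_1=h(\alpha_l)$ while $\alpha'_1=h(\alpha'_l)$. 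Everything therefore reduces to (a) comparing $g(\alpha_{l+1})$ with $f(\alpha_{l+1})$, and (b) pushing the resulting inequality through $h$, which by Lemma~\ref{lema4} is continuous and increasing on $(2-x,+\infty)$.

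For item~(i) we are in the case $x>0$, $\alpha_{l+1}>2$, so Lemma~\ref{lema5} (part~{\bf iv}) gives $g(\alpha_{l+1})<f(\alpha_{l+1})$, hence $\alpha'_l<\alpha_l$, and applying the increasing map $h$ yields $\alpha'_1=h(\alpha'_l)<h(\alpha_l)=\alpha_1$: the claimed decrease. For item~(ii) we have $x<0$ and $\alpha_{l+1}<\tfrac1x<0$, a configuration not covered by the four parts of Lemma~\ref{lema5}, so I would establish the needed comparison directly. Using the identity $f(\alpha)=\tfrac{\alpha x-1}{x}$,
\begin{equation*}
g(\alpha)-f(\alpha)=(\alpha x-1)\Big(\tfrac{1}{\alpha+x-2}-\tfrac1x\Big)=(\alpha x-1)\,\frac{2-\alpha}{x(\alpha+x-2)},
\end{equation*}
and for $\alpha<\tfrac1x<0$ with $x<0$ each factor has a determined sign: $\alpha x-1>0$ (multiply $\alpha<\tfrac1x$ by the negative number $x$), $2-\alpha>0$, $x<0$, and $\alpha+x-2<0$, so the whole expression is positive. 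Thus $g(\alpha_{l+1})>f(\alpha_{l+1})$, i.e. $\alpha'_l>\alpha_l$, and $\alpha'_1=h(\alpha'_l)>h(\alpha_l)=\alpha_1$: the claimed increase.

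The one genuinely delicate point — exactly as in Lemma~\ref{lema6} — is the legitimacy of step~(b): Lemma~\ref{lema4} only gives the monotonicity of $f$ and $g$ on the half-line $(2-x,+\infty)$, away from the pole $\alpha=2-x$ of $g$, so one needs to know that all the temporarily-stored values $\alpha_2,\ldots,\alpha_l$ (for $G$) and $\alpha'_2,\ldots,\alpha'_l$ (for $H$) remain in that half-line; equivalently, that \subic~is executed on none of those iterations. This is exactly the standing hypothesis of the present section, and it is also why the constraints $\alpha_{l+1}>2$ in~(i) and $\alpha_{l+1}<\tfrac1x$ in~(ii) are imposed: they pin down the branch of $g$ on which the single replaced value $g(\alpha_{l+1})$ lands. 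Once this is in place, both items follow from the same ``compare one step, then propagate through an increasing composition'' scheme already used for Lemma~\ref{lema6}, which is why the result may be stated as an analogue.
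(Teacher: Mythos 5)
Your proposal is correct and takes essentially the approach the paper intends: the paper omits the proof of Lemma~\ref{lema7} entirely, saying only that it can be verified analogously to Lemma~\ref{lema6}, and your argument is exactly that analogue (alter the single map $h_{l+1}$ from $f$ to $g$, compare the two values it produces, then push the inequality through the increasing composition $h_2\circ\cdots\circ h_l$, with \subic\ assumed avoided). Your explicit sign computation showing $g(\alpha_{l+1})>f(\alpha_{l+1})$ when $x<0$ and $\alpha_{l+1}<\tfrac{1}{x}$ is a useful supplement, since that case is not literally covered by Lemma~\ref{lema5}.
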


\section{The proof of Conjecture}
\label{conjecture}

The connected anti-regular graph on $n$ vertices, denoted by $A_n$ is a threshold graph 
with binary sequence $b=(0101 \ldots 01)$ when $n$ is even and $b=(00101\ldots 01)$ when $n$ is odd. It was proved in \cite{Cesar} (see, also \cite{JTT2013}) that $A_n$ has simple eigenvalues and moreover has inertia $i(A_{2k})=(k, 0, k)$ if $n=2k$ is even and $i(A_{2k+1})=(k,1,k)$ if $n=2k+1$ is odd, and therefore $\lambda_{k+1}(A_{2k+1})=0$ and $\lambda_k(A_{2k})=-1.$

The following result is due \cite{Cesar2}.
\begin{Prop}
The interval $\Omega= [\frac{-1-\sqrt{2}}{2}, \frac{-1+\sqrt{2}}{2}]$ does not contain any eigenvalue $\lambda\neq -1,0$ of any threshold graph.
\end{Prop}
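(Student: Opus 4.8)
The plan is to use the diagonalization algorithm together with the sign-counting interpretation from Theorem~\ref{main1}. Fix a scalar $x$ and suppose $-x \in \Omega$; we want to show that $-x$ cannot be an eigenvalue distinct from $-1,0$ of any threshold graph $G$. Equivalently, by Theorem~\ref{main1} and Lemma~\ref{alphaleqone}, we must show that \verb+Diagonalize+$(G,x)$ never produces a zero at the top of the diagonal when $x \in [\frac{1-\sqrt2}{2}, \frac{1+\sqrt2}{2}]$ (and $x \neq 0,1$). Since the top entry equals $\alpha_1 = h_2 \circ \cdots \circ h_i(\alpha_i)$, and since by Lemma~\ref{lema5}(i) the value $f(\alpha)$ vanishes exactly at $\alpha = \frac1x$, the strategy is to track the $\alpha$-sequence and show it stays inside an interval on which none of the relevant compositions can ever hit $0$.

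The key step is to identify an interval $J$ (depending on $x$) that is forward-invariant under both $f$ and $g$ and that excludes the zero of $f$. Concretely, I expect to take $J$ to be something like $(2-x,\infty)$ intersected with a region where $\alpha > \frac1x$ when $x>0$, using Lemma~\ref{lema5}: part (ii) says that if $\alpha$ is small and $\alpha + x - 2 < 0$ then $g(\alpha)>0$, while parts (iii)--(iv) compare $f$ and $g$. The algorithm starts with $\alpha_n = x$, and the crucial arithmetic fact is that $x \in \Omega$ forces $x$ itself, and hence all subsequent iterates, to lie in this safe region: one checks that the endpoints $\frac{1\pm\sqrt2}{2}$ are precisely the roots of a quadratic (namely $x^2 - x - \frac14 = 0$, i.e. $2x - 1 = \pm\sqrt{2}\cdot$ something — the relevant identity being that $\frac1x$ and $2-x$ coincide or cross exactly at these values) that governs when $g$ can map into the danger zone. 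I would first verify that for $x \in \Omega \setminus \{0,1\}$ we have $\alpha_n = x$ already on the correct side, then show $f(J) \subseteq J$ and $g(J) \subseteq J$ with $0 \notin J$, so by induction the whole $\alpha$-sequence avoids $0$; in particular $\alpha_1 \neq 0$, so $-x$ is not an eigenvalue. One also has to confirm that \subic~ is never triggered, i.e. $\alpha + x \neq 2$ throughout, which is automatic once the iterates stay in $(2-x,\infty)$.

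The main obstacle I anticipate is the case analysis on the sign and magnitude of $x$: the interval $\Omega$ straddles $0$, so one must separately handle $x>0$ (where $\frac1x>0$ is the zero of $f$ and the relevant bound involves $x<2$ and $\frac1x$) and $x<0$ (where $f$ has no positive zero but $g$'s pole at $2-x$ and monotonicity on $(2-x,\infty)$ must be used, via Lemma~\ref{lema5}(ii)). In each regime I need the right invariant interval and must check the boundary behaviour carefully — in particular that the iterate $\alpha = x$ at the start lands strictly inside, which is exactly where the numerical values $\frac{1\pm\sqrt2}{2}$ enter as the sharp threshold. A secondary subtlety is that $b_1 = 0$ always, so the \emph{last} function applied (reading right to left, the composition ending at the top) is $f = h_2$, not $g$; thus the final step of the induction must show $f$ maps the safe interval to a set not containing $0$, which again reduces to $\alpha_2 \neq \frac1x$, handled by the invariance argument one level down.
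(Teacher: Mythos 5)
The paper offers no proof of this Proposition at all --- it is imported from \cite{Cesar2} (see also \cite{gorbani}) --- so your proposal must stand on its own, and as it stands there is a genuine gap: the invariant region, which is the whole mathematical content, is neither constructed nor constructible in the form you describe. Your reduction is fine (for $x\neq 0,1$ a zero can only be written at the top, so it suffices that $\alpha_1=f(\alpha_2)\neq 0$, i.e.\ $\alpha_2\neq \frac1x$), but the region you guess, ``$(2-x,\infty)$ intersected with $\alpha>\frac1x$ when $x>0$'', is not invariant under $f$: $f$ is translation by $-\frac1x$, so any point within $\frac1x$ of the lower edge is mapped out (for $x=\frac12$ the region is $(2,\infty)$ and $f(2.1)=0.1$). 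Worse, for $0<x<1$ the orbit does not even start there, since $\alpha_n=x<\frac1x$; in that regime the correct trap is $(-\infty,1)$, using the fixed point $g(1)=1$ and the fact that $f$ moves points down, so the iterates stay \emph{below} $\frac1x$ and to the \emph{left} of the pole $2-x$ --- the opposite side from your guess. Your ``relevant identity'' is also false: $\frac1x=2-x$ holds only at $x=1$, not at $x=\frac{1\pm\sqrt2}{2}$. The actual reason these endpoints appear is dynamical: the fixed-point equation of the composition $f\circ g$ (the map corresponding to one ``$01$'' block), namely $x\alpha^2+(1-2x)\alpha+2(x-1)=0$, has discriminant $1+4x-4x^2$, which is nonnegative exactly for $x\in[\frac{1-\sqrt2}{2},\frac{1+\sqrt2}{2}]$; those fixed points are what create the barrier, and no such mechanism appears in your sketch.

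Moreover, on the genuinely new part of $\Omega$ (beyond the previously known eigenvalue-free interval $(-1,0)$, i.e.\ for $x\in(1,\frac{1+\sqrt2}{2}]$ and $x\in[\frac{1-\sqrt2}{2},0)$) a single invariant interval of the kind you plan provably cannot exist: an interval invariant under $f$ must be unbounded below, and if it also contains the starting value $\alpha_n=x>1$ it contains $(-\infty,x]$, hence both $\frac1x$ and the pole $2-x$, violating exactly the two exclusions you need. So one must either work with a disconnected invariant set built from the fixed points above, or track the sign pattern of the final diagonal more finely. Finally, you cannot dismiss \subic\ as ``automatic once the iterates stay in $(2-x,+\infty)$'': they do not stay there (for $0<x<1$ they never enter it, and for $x>1$ they cross from one side of the pole to the other), so you must either rule out hitting $\alpha=2-x$ exactly or observe that \subic\ writes the nonzero permanent entries $1$ and $-(1-x)^2$ and restarts the chain at $x$, and redo the analysis from that restart. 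As written, the proposal reproduces the framework already set up in Section~\ref{main} but not a proof of the Proposition.
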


Now, let introduce the $n-2$ critical threshold graphs. According cited in \cite{Cesar2} they are identified having binary sequence $G=(0^{s_1} 1^{t_1} \ldots 0^{s_k} 1^{t_k})$ such that 
\begin{itemize}
\item If $n=2k+2$ then either $s_1=2$ and exactly one of $s_2, \ldots, s_k, t_1, \ldots, t_k$ is also equal to two and all others are one, or $s_1=3$ and all other $s_i=t_i=1.$
\item If $n=2k+1$ then either $s_1=1$ and only  one of $s_2, \ldots, s_k, t_1, \ldots, t_k$ equals two and all others equal one.
\end{itemize}

Recall that $\lambda^{-}(G)$ denotes the largest eigenvalue of  a critical threshold graph $G$ less than $-1$ and $\lambda^{+}(G)$ denotes the smallest positive eigenvalue of $G.$
For completing the proof of Conjecture \ref{conjec}, we need to show that $\lambda^{-}(G) \leq \lambda^{-}(A_n)$ if $n$ is even, and 
$\lambda^{+}(A_n) \leq \lambda^{+}(G)$ if $n$ is odd.

\subsection{ The case $n$ is even}

\begin{Lem}
\label{lemamax}
Let $G$ be a graph having the  largest eigenvalue $\lambda^{-}(G)$ among all  $n-2$ critical threshold graphs with $s_1=2.$  
\begin{my_description}
\item[i]
If $G$ has binary sequence $G=(b_1, b_2, \ldots, b_{i-1}, 0,0,1,0,1,\ldots,0,1)$ then after processing $b_{i+1}=0$ and $b_{i+2}=1$ by  \verb+Diagonalize+$(G,x)$
the assignment   is $\alpha < \frac{1}{x},$ where $x=-\lambda^{-}(G).$
\item[ii]
If $G$ has binary sequence $G=(b_1, b_2, \ldots, b_{i-1}, 1,1,0, \ldots,0,1)$  then after processing $b_{i+1}=1$ and $b_{i+2}=0$ by \verb+Diagonalize+$(G,x)$
has assignment $\frac{1}{x} < \alpha,$ where $x=-\lambda^{-}(G).$
\end{my_description}
\end{Lem}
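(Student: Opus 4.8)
Put $x=-\lambda^{-}(G)$; since $\lambda^{-}(G)<-1$ the Proposition above gives $x>\frac{1+\sqrt{2}}{2}>1$, and by Lemma~\ref{lema2} the run Diagonalize$(G,x)$ ends with $\alpha_{1}=0$ at the top. The plan is the same for both parts: assume the stated inequality on $\alpha$ fails, and flip a single bit $b_{l}$ of $G$ (with $1<l<n$) to produce a connected critical threshold graph $H$ with $s_{1}=2$, still avoiding \subic\ at $x$, for which the top value of Diagonalize$(H,x)$ becomes \emph{positive}; by Theorem~\ref{main1} and Lemma~\ref{lema3} this forces $\lambda^{-}(H)>\lambda^{-}(G)$, contradicting the maximality of $G$. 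Whether the top turns positive is controlled only by the sign of $f(\alpha_{l+1})-g(\alpha_{l+1})=\frac{(\alpha_{l+1}-2)(\alpha_{l+1}-1/x)}{\alpha_{l+1}+x-2}$ (equivalently Lemma~\ref{lema5}), because by Lemma~\ref{lema4} the composition $h_{2}\circ\cdots\circ h_{l}$ lying below the flipped bit is continuous, increasing on $(2-x,+\infty)$, and identical for $G$ and $H$. Throughout one also uses that every temporary value stays in $(2-x,+\infty)$ — otherwise \subic\ would be executable — and that no $\alpha_{m}$ with $m>1$ equals $1/x$, since that would write a zero before the final iteration, against Lemma~\ref{lema2}.

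For part (ii), let the extra block $11$ occupy positions $i,i+1$, let the tail $b_{i+2}\cdots b_{n}$ be $0101\cdots01$, and let $\alpha$ be the assignment in question (the temporary value $\alpha_{i+1}$, produced while processing $b_{i+1}=1$, $b_{i+2}=0$). I take $H$ to be $G$ with $b_{i}$ flipped from $1$ to $0$: then $H$ is connected and critical with $s_{1}=2$ (the extra double becomes $00$ one block to the left). In Diagonalize$(H,x)$ the temporary values agree with $G$'s down to $\alpha_{i+1}=\alpha$, after which $h_{i+1}$ switches from $g$ to $f$, so the top of $H$ is positive exactly when $f(\alpha)>g(\alpha)$, i.e.\ (since $\alpha>2-x$) when $\alpha<1/x$ or $\alpha>2$. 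Either case yields the contradiction; combined with $\alpha\ne1/x$ this gives $\alpha>1/x$.

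For part (i) the extra block $00$ is at positions $i,i+1$, the tail $b_{i+2}\cdots b_{n}$ is $1010\cdots01$, and $\alpha=\alpha_{i+1}$ is assigned while processing $b_{i+1}=0$, $b_{i+2}=1$. Flipping $b_{i}$ from $0$ to $1$ gives a connected critical $H$ with $s_{1}=2$ (the extra double becomes a block $11$ at positions $i-1,i$, i.e.\ $t_{j-1}=2$); now $h_{i+1}$ switches from $f$ to $g$, so the top of $H$ is positive exactly when $g(\alpha)>f(\alpha)$, i.e.\ when $1/x<\alpha<2$. With $\alpha\ne1/x$ this already excludes $\alpha\in[1/x,2)$, so only $\alpha\ge2$ remains to be ruled out. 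Tracing the algorithm through the alternating tail gives $\alpha=f(\beta)$, where $\beta$ is the value produced by diagonalizing that tail, namely $\beta=(g\circ f)^{m}(x)$ for the appropriate $m\ge0$; thus $\alpha\ge2$ amounts to $\beta\ge2+1/x$. I would rule this out by bounding the orbit $\{(g\circ f)^{t}(x)\}_{t\ge0}$ over the admissible range $x>\frac{1+\sqrt{2}}{2}$, using that the defining relation $\alpha_{1}=0$ (Lemma~\ref{lema2}) couples $x$ itself to that same orbit, and thereby forcing $\alpha<2$. Hence $\alpha<1/x$.

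The main obstacle is precisely this last step of part (i): the configuration $\alpha\ge2$ cannot be created by a single-bit flip that preserves ``critical with $s_{1}=2$'', so Lemmas~\ref{lema6}--\ref{lema7} do not reach it, and one is forced into the quantitative estimate of the iterates $(g\circ f)^{t}(x)$ above, for which the eigenvalue-free interval $\Omega$ (hence the restricted range of $x$) is essential. A handful of boundary configurations need small separate arguments as well — for instance when the extra double is $t_{1}$, the natural flip yields the $s_{1}=3$ graph rather than an $s_{1}=2$ one, so one flips $b_{i+1}$ instead, or invokes the handling of that graph elsewhere in the proof of the Conjecture.
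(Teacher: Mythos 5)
Your overall strategy is the one the paper uses: assume the stated inequality fails, flip a single bit of the doubled block to obtain another critical threshold graph $H$ with $s_1=2$, observe that the functions $h_2,\ldots$ below the flip are unchanged and increasing so the effect at the top is governed by the sign of $f(\alpha)-g(\alpha)$, and contradict the maximality of $\lambda^{-}(G)$ using the equal number of $01$ substrings. The genuine gap is that your part (i) is not actually closed: the case $\alpha\ge 2$ is exactly where you stop, and the promised quantitative bound on the iterates $(g\circ f)^{t}(x)$ over $x>\frac{1+\sqrt{2}}{2}$ is never carried out --- ``I would rule this out by bounding the orbit'' is a plan, not an argument, and by your own admission it is the crux. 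For comparison, the paper's proof of Lemma \ref{lemamax} never enters this analysis: it assumes $\alpha>\frac{1}{x}$, asserts $0<f(\alpha)<g(\alpha)$ (in effect Lemma \ref{lema5}(iii), which presupposes $\alpha<2$), and concludes immediately; it does not split off $\alpha\ge 2$ or perform any orbit estimate.

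A second concrete problem is your blanket claim that every temporary value lies in $(2-x,+\infty)$ ``since otherwise subcase 1c would be executable.'' Subcase 1c is triggered only by the exact equality $\alpha+x=2$; values strictly below $2-x$ trigger nothing and are not excluded by Lemma \ref{lema2} either. This claim is load-bearing twice: the identity $f(\alpha)-g(\alpha)=\frac{(\alpha-2)(\alpha-1/x)}{\alpha+x-2}$ gives ``$f>g$ iff $\alpha<\frac{1}{x}$ or $\alpha>2$'' only when $\alpha+x-2>0$, and the monotonicity of the composition $h_2\circ\cdots\circ h_l$ (Lemma \ref{lema4}) is likewise only available on $(2-x,+\infty)$. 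In particular, in part (ii) the region $\alpha\le 2-x$, where in fact $f(\alpha)<0<g(\alpha)$ by Lemma \ref{lema5}(ii), is not reached by your flip, so even part (ii) is not complete as written. Together with the hand-waved boundary configurations (where the natural flip leaves the $s_1=2$ family, e.g.\ when the extra double sits at $t_1$), these are real gaps; the missing exclusion of $\alpha\ge 2$ in part (i) is the most serious one.
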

\begin{proof} First we note that during execution of \verb+Diagonalize+$(G,x)$ where $x=-\lambda^{-}(G),$ we must have $\alpha=\frac{1}{x}$ only in the step $m=2,$
according to Lemma \ref{lema2} and Lemma \ref{lema5} (part $(i)).$
Now, we check the  item $(i).$ 

Let $G$ having binary sequence $G=(b_1, b_2, \ldots, b_{i-1}, 0,0,1,0,1,\ldots,0,1).$
 We assume that in the $(i+1)-$th iteration of \verb+Diagonalize+$(G,x)$
has assigned $\alpha > \frac{1}{x}.$
Let $H$ be the threshold graph obtained from $G$  by changing a single $b_{i+1}$ from $0$ to $1,$
and consider \verb+Diagonalize+$(H,x),$ where $x=-\lambda^{-}(G).$ It is easy to see that in $i-$th iteration of \verb+Diagonalize+$(G,x)$ we will have $f(\alpha) >0$ while that in \verb+Diagonalize+$(H,x)$  we will have $g(\alpha)>0,$ such that $0< f(\alpha) <g(\alpha).$ Taking into account remaining elements of binary sequence are equal follows  \verb+Diagonalize+$(H,x)$ will assigned $\alpha'_1 > \alpha_1=0.$  Since $G$ and $H$ have the same number of substrings $01$ (and therefore have the same number of positive eigenvalues) follows $\lambda^{-}(G) < \lambda^{-}(H),$ what is a contradiction.
 The proof for item $(ii)$ is similar.
\end{proof}

Let $G$ be a threshold graph having binary sequence $G=(0^{s_1} 1^{t_1} \ldots 0^{s_k} 1^{t_k})$ and let $\lambda^{-}(G)$ be the largest eigenvalue less than $-1.$
Let $\delta_{d}(G)$ denotes the signal of the final diagonal of  \verb+Diagonalize+$(G,x),$ where $x=-\lambda^{-}(G).$

It follows from the Lemma \ref{lemamax} the following result.
\begin{Thr}
\label{mainan}
Let $A_n$ be the anti-regular graph on $n$ vertices and let $(d_i)$ be the final diagonal of
\verb+Diagonalize+$(A_n,y).$
If $n\geq 3$  then
$$
\delta_{d}(A_n)= \left\{ \begin{array}{rl}
 (+, +, -,\ldots,-,+)  &\mbox{ if $n$ is even and $-y \in (\lambda^{-}(A_n),-1)$} \\
 (-,+,+,\ldots,-,+) &\mbox{ if $n$ is odd and $-y \in (\lambda^{-}(A_n),0)$}
       \end{array} \right.
$$
\end{Thr}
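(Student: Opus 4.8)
The plan is to run \verb+Diagonalize+$(A_n, y)$ explicitly for a scalar $-y$ lying just above $\lambda^{-}(A_n)$ (respectively just above $0$ when $n$ is odd) and track the sign pattern of the final diagonal entries as they are produced, bottom up. Since $A_n$ has creation sequence $(0101\cdots 01)$ when $n=2k$ and $(00101\cdots 01)$ when $n=2k+1$, and since $-y$ lies in $(\lambda^{-}(A_n),-1)$ (even) or $(\lambda^{-}(A_n),0)$ (odd), we have $y\notin\{0,1\}$, so by the discussion preceding Lemma~\ref{lema4} the algorithm never enters \subib{} or \subiia{}; and by the arguments of Section~\ref{main} (together with the fact, to be invoked, that \subic{} is avoided) the execution stays in \subia{} and \subiib{} throughout. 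Thus the $\alpha$-sequence is governed by alternately applying $g$ (after a $1$) and $f$ (after a $0$), and each iteration writes a permanent $d_m = \alpha + y - 2$ (in \subia) or $d_m = y$ (in \subiib), with the very last step writing $d_1 = 0$ by Lemma~\ref{lema2} (this is exactly where $-y=\lambda^{-}(A_n)$ would sit; for $-y$ slightly larger the top entry is slightly positive, giving the final $+$).

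The key quantitative input is Lemma~\ref{lemamax}: applied to $G=A_n$ (which is itself one of the critical graphs with $s_1=2$ when $n$ is even, and with $s_1=1$ when $n$ is odd, and in fact is the extremal one), it tells us that at the relevant stage the temporarily assigned $\alpha$ satisfies $\alpha<\tfrac1x$ in the even case and $\tfrac1x<\alpha$ in the odd case, where $x=-\lambda^{-}(A_n)$; by continuity the same strict inequalities persist for $-y$ in a neighborhood, i.e.\ on the stated intervals. Combined with Lemma~\ref{lema5}: part~(ii) gives that while $\alpha<\tfrac1x$ and $\alpha+x-2<0$ one has $f(\alpha)<0$ and $g(\alpha)>0$, so that in the alternating pattern the \subiib{} steps ($d_m=y<0$) contribute the long run of $-$'s and the \subia{} steps ($d_m=\alpha+y-2$, with $\alpha$ in the regime where this is positive) contribute the interspersed behavior; parts~(iii)–(iv) control the crossover of $f$ and $g$ at $\alpha=2$. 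Feeding the explicit alternating creation sequence of $A_n$ into this, one reads off: the bottom entry $d_n$ (from the final $1$, a \subiib{} step with a fresh $\alpha=y$, giving $d_n=y+? $) and then an alternation of signs, collapsing after the first couple of steps into the block $(-,\ldots,-)$ coming from the repeated $y$'s, bracketed by the $+$'s at positions corresponding to the initial $00$ (even: $(+,+,\ldots)$) or $000$-free prefix $00$ (odd: $(-,+,+,\ldots)$), and terminated by $d_1=0^{+}=+$. Matching multiplicities against the known inertia $i(A_{2k})=(k,0,k)$ and $i(A_{2k+1})=(k,1,k)$ (and noting that for $-y\in(\lambda^{-}(A_n),-1)$ we are asking for one more negative entry than at $-y=\lambda^{-}$, hence $k$ minus signs and so on) pins down the exact strings $(+,+,-,\ldots,-,+)$ and $(-,+,+,\ldots,-,+)$.

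The main obstacle is the bookkeeping at the two ends of the sequence: one must verify that the first few iterations (processing the trailing $\ldots 01$) really do produce the sign pattern claimed near $d_n$, and that the last iterations (processing the leading $00$ or $00$ in the odd case, i.e.\ the \subia{} step(s) with $b_{l-1}=1$ after the initial zeros) produce exactly the leading $(+,+)$ or $(-,+,+)$ — this requires showing the relevant $\alpha$'s fall on the correct side of the thresholds $2-x$, $\tfrac1x$, and $2$ appearing in Lemmas~\ref{lema4} and~\ref{lema5}, which is precisely what Lemma~\ref{lemamax} was set up to deliver but must be tracked carefully through the composition. A secondary obstacle is confirming rigorously that \subic{} never fires on the interval in question, i.e.\ that $\alpha+y\neq 2$ at every step; this should follow from the same monotonicity/continuity package (Lemma~\ref{lema4}) plus the fact that $-y$ avoids the finitely many eigenvalues, but it deserves an explicit line. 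Once these endpoint sign assignments are nailed down, the middle of the string is forced by the rigid alternation of $f$ and $g$ and the constancy of $d_m=y<0$ on the \subiib{} steps, and the theorem follows.
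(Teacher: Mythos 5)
Your proposal rests on two load-bearing claims that do not hold, so the gap is genuine rather than cosmetic. First, the ``key quantitative input'' is not available: Lemma~\ref{lemamax} is stated only for the extremal graph among the $n-2$ critical graphs with $s_1=2$, at the single value $x=-\lambda^{-}(G)$, and it controls the temporary $\alpha$ produced immediately after the doubled block $00$ or $11$. The anti-regular graph is not one of the critical graphs at all (for $n$ even its sequence has $s_1=1$, for $n$ odd it has $s_1=2$ and no second doubled block, whereas the critical graphs require $s_1\in\{2,3\}$, respectively $s_1=1$ plus exactly one doubled block; you also have these two $s_1$ values interchanged). So ``applying Lemma~\ref{lemamax} to $G=A_n$'' is not a legitimate step, let alone for all $y$ with $-y$ ranging over an interval; and presupposing that $A_n$ is extremal would be circular, since that is the conjecture. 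The inertia counts you invoke at the end fix only how many $+$ and $-$ occur, which is already Theorem~\ref{main1} together with Bapat's count; the content of the statement is the \emph{placement} of the signs, which counts cannot pin down.

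Second, the step you defer as ``a secondary obstacle'' --- that \subic~never fires on the interval --- is exactly where the argument breaks, and continuity plus avoidance of eigenvalues cannot repair it, because \subic~points are not eigenvalues. Concretely, for $A_4=(0101)$ one has $\lambda^{-}(A_4)\approx -1.4812$, and at $y=\frac{1+\sqrt{3}}{2}\approx 1.366$, for which $-y\in(\lambda^{-}(A_4),-1)$, the algorithm reaches $\alpha+y=2$ at step $m=3$, so \subic~executes; moreover, writing the diagonal top entry first as in (\ref{eq6})--(\ref{eq7}), the final signs are $(+,+,-,+)$ for $-y\in(-\frac{1+\sqrt{3}}{2},-1)$ but $(-,+,+,+)$ for $-y\in(\lambda^{-}(A_4),-\frac{1+\sqrt{3}}{2})$. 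Hence the sign \emph{positions} are not constant over the stated interval (only the counts are), your assertion that the top entry is ``slightly positive'' just above $\lambda^{-}(A_n)$ has the wrong sign there, and any correct argument must track the $\alpha$-sequence explicitly against the thresholds $2-y$, $\tfrac1y$ and $2$, restricting $y$ or treating the \subic~transitions, none of which the sketch does. For what it is worth, the paper itself supplies no argument beyond the sentence that the theorem ``follows from Lemma~\ref{lemamax}'', so there is no detailed proof for yours to match; but as written your proposal does not close the gap either.
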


\noindent{\bf Remark:} Let $G$ be one of $n-2$ critical threshold graphs. Note that during execution of \verb+Diagonalize+$(G, x),$ with $x=-\lambda^{-}(G)$ the \subic~cannot occur
for $m=2$ nor for $m=3.$ If it occurs for an intermediate step then implies each substring of type $(1010)$ has final sign equal to  $(+,+,+,-)$ contrary to Theorem \ref{mainan}.

For showing the conjecture, we first need to prove the following result.

\begin{Thr}
\label{main5}
Let $G$ be one of $n-2$ critical threshold graphs on $n=2k$ vertices. Then holds:
\begin{my_description}
\item[i] $\lambda^{-}(0^{2} 1 01 \ldots 0^{2} 1 01 \ldots 01) < \lambda^{-}(0^{2} 1 01 \ldots 0 1^{2} 01 \ldots 01)$ for each $(s_k , t_{k})$ and $\lambda^{-}(0^{2} 1 01 \ldots 0 1^{2} 01 \ldots 01) < \lambda^{-}(0^{2} 1 01 \ldots 0 10^{2} 1 \ldots 01)$ for  each $(t_k,s_{k+1})$  and $k>\frac{n}{2}.$
\item[ii]  $\lambda^{-} (0^{2} 1 01 \ldots 0 1^{2}  01 \ldots 01) <\lambda^{-} (0^{2} 1 01 \ldots 0^2 10 \ldots 01)$  for each $(s_{k},t_k)$ and $\lambda^{-} (0^{2} 1 01 \ldots 0^2 101 \ldots 01) < \lambda^{-} (0^{2} 1 01 \ldots 0 1^{2}  01 \ldots 01)$ for each $(t_{k-1}, s_k),$ and $k\leq \frac{n}{2}.$
\end{my_description}

\end{Thr}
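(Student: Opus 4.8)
The plan is to prove each of the four inequalities in Theorem~\ref{main5} by the same mechanism used in Lemma~\ref{lemamax}: take $G$ to be the left-hand graph, set $x = -\lambda^{-}(G)$, run \verb+Diagonalize+$(G,x)$, and then study the effect on the final top-of-diagonal value $\alpha_1$ of the local modification that turns $G$ into the right-hand graph $H$. Since each pair $(G,H)$ in the statement differs in exactly one coordinate of the creation sequence (a single $0\!\to\!1$ or $1\!\to\!0$ flip: e.g.\ $01\!\to\!1^2$ changes a $0$ to a $1$, $1^2\!\to\!10^2$ changes a $1$ to a $0$, etc.), Lemma~\ref{lema6} and Lemma~\ref{lema7} apply directly once I know the sign of $x$ and the position of the relevant $\alpha_{l+1}$ relative to the thresholds $\tfrac1x$ and $2$. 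Because both graphs are critical threshold graphs with $\lambda^{-}\in(\lambda^{-}(A_n),-1)$ — in particular $x = -\lambda^{-}(G) \in (1, 1+\sqrt2\,)$ by the Proposition on $\Omega$ — we are always in the regime $x>0$, $\tfrac1x<1<x<2$, so only parts (i) and (ii) of Lemma~\ref{lema6} and part (i) of Lemma~\ref{lema7} are in play.

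First I would establish, for each of the four flips, that $\alpha_1$ strictly decreases under the flip (this is the content of Lemma~\ref{lema6}(i)/(ii) when the flip is $1\!\to\!0$ and the relevant $\alpha_{l+1}$ lies below $2$, and of Lemma~\ref{lema7}(i) when the flip is $0\!\to\!1$ and $\alpha_{l+1}>2$ — I expect one can check which case occurs by the position of the flip inside the $01010\ldots$ tail). Second — and this is the crucial bookkeeping step — I would check that $G$ and $H$ have the same number of substrings $01$, i.e.\ $n_{+}(G)=n_{+}(H)$ by Theorem~\ref{numberzeroone}. For the pairs of the form $01\leftrightarrow 1^2$ inside the tail this is immediate (moving the block boundary one unit does not create or destroy an $01$ occurrence); for the pairs $1^2\leftrightarrow 10^2$ one must count carefully, but it is again a local count. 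Given equal $n_{+}$, the final diagonal of each has the same number of positive entries, hence the same number of nonpositive entries; combined with the sign pattern from Theorem~\ref{mainan} and the Remark (which rules out \subic~ occurring at $m=2,3$ and, via the sign pattern, at intermediate steps), the strict decrease of $\alpha_1$ forces, by Lemma~\ref{lema3}(i), $\lambda^{-}(G) < \lambda^{-}(H)$ — which is exactly the claimed direction.

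The one genuinely delicate point, and where I expect the main obstacle, is verifying the hypothesis of the relevant sub-lemma: namely that the temporary value $\alpha_{l+1}$ assigned just before the flipped coordinate is on the correct side of $2$ (and of $\tfrac1x$). This is precisely the information packaged by Lemma~\ref{lemamax}, which tells us that after processing $0,0,1$ the assignment is below $\tfrac1x$, and after processing $1,1,0$ the assignment is above $\tfrac1x$; I would need to push this a little further — tracking, via the monotonicity of $f$ and $g$ on $(2-x,+\infty)$ (Lemma~\ref{lema4}) and the comparisons in Lemma~\ref{lema5}(iii)–(iv), how a single application of $g$ to a value in $(\tfrac1x,2)$ versus an $f$-step keeps the running $\alpha$ inside or pushes it past the interval $(\tfrac1x,2)$. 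The sign pattern $(+,+,-,\ldots,-,+)$ from Theorem~\ref{mainan} is the global constraint that pins all these intermediate $\alpha$'s down: each "$-$" in the pattern records an $\alpha$-value passing through a pole of $g$ (equivalently $\alpha+x-2$ changing sign), so the pattern itself dictates on which side of $2$ each $\alpha_{l+1}$ sits. Assembling these local sign determinations for the four flip-positions, and confirming that \subic~ is genuinely avoided throughout (again by the Remark), is the bulk of the work; once that is in place the four inequalities follow formally as above, and item (ii) is handled by the mirror-image argument with the roles of left and right graphs exchanged.
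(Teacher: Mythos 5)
Your overall mechanism (a single-bit flip, the monotonicity of $f$ and $g$, sign-pattern bookkeeping, Lemma~\ref{lema3}) is the paper's mechanism, but you anchor the diagonalization at the wrong graph, and this reverses the inequality your argument actually delivers. The paper proves $\lambda^{-}(G_1)<\lambda^{-}(G_2)$ by running \texttt{Diagonalize} at $x=-\lambda^{-}(G_2)$, the eigenvalue of the \emph{right-hand} (larger) graph: Lemma~\ref{lema2} puts a zero at the top of the diagonal for $G_2$, Lemma~\ref{lemamax} places the last temporary value $\alpha$ common to both runs in $(\tfrac{1}{x},2)$, and the $1\to 0$ flip that turns $G_2$ into $G_1$ then drives the top entry strictly negative by Lemma~\ref{lema6}(i); thus $G_1$ has one more negative entry than $G_2$ and Lemma~\ref{lema3}(i) yields exactly $\lambda^{-}(G_1)<\lambda^{-}(G_2)$. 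You instead set $x=-\lambda^{-}(G)$ for the \emph{left-hand} graph $G$, flip toward $H$, claim $\alpha_1$ strictly decreases, and conclude that Lemma~\ref{lema3}(i) gives $\lambda^{-}(G)<\lambda^{-}(H)$. But Lemma~\ref{lema3}(i) says the opposite: if, at $x=-\lambda(G)$, \texttt{Diagonalize}$(H,x)$ has one more negative entry than \texttt{Diagonalize}$(G,x)$, then $\lambda(H)<\lambda(G)$. So a strict decrease of $\alpha_1$ (from $0$ to a negative value) in your setup would establish the reverse of the stated inequality; indeed, if the theorem is true, the flip in your anchoring must push the top entry upward, and then the conclusion would have to be extracted directly from eigenvalue counts, since Lemma~\ref{lema3} does not cover that configuration. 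This direction error is the essential gap.

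Two secondary points. The assertion $x=-\lambda^{-}(G)\in(1,1+\sqrt{2})$, and in particular $x<2$, does not follow from the $\Omega$-Proposition, which only gives $x>\tfrac{1+\sqrt{2}}{2}$; the paper never needs a bound on $x$ of this kind --- what matters is that the relevant temporary value $\alpha$ lies in $(\tfrac{1}{x},2)$, and that is precisely what Lemma~\ref{lemamax} supplies via a separate extremality argument, not via bounding $x$. Also, the step ``equal $n_{+}$ implies the two final diagonals have the same number of positive entries'' conflates counts at $0$ with counts at $\lambda^{-}$: at $x=-\lambda^{-}$ the number of positive diagonal entries equals the number of eigenvalues exceeding $\lambda^{-}$, which the paper computes as the number of substrings $01$, $11$ and $00$ (using that no eigenvalue lies in $(-1,0)$ and that $\lambda^{-}$ is the largest eigenvalue below $-1$), not as $n_{+}$ alone.
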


\begin{proof} Let denotes by $G_1= (0^{2} 1 01 \ldots 0^{2} 1 01 \ldots 01)$ with $s_k=2$ and $G_2= (0^{2} 1 01 \ldots 0 1^{2} 01 \ldots 01)$ with $t_{k}=2$ for $k>\frac{n}{2}.$
 We  show the inequality $\lambda^{-}(G_1)< \lambda^{-}(G_2).$ According to Lemma \ref{lema3} it is suffices to show that number
of negative entries  in \verb+Diagonalize+$(G_1,x),$ exceed  by one the number of negative entries in \verb+Diagonalize+$(G_2,x),$ where $x=-\lambda(G_2).$  

We consider the \verb+Diagonalize+$(G_2,x),$ where $x=-\lambda^{-}(G_2).$
Since that entries positive corresponds to the number of substrings $01,$ $11$ and $00$ in the creation sequence, and using the Lemma \ref{lema2} 
\begin{equation}
\label{eq6}
\delta_{d}(G_2) = (0,+,+,-,+,\ldots,-,+,+, -,+, \ldots,-,+)
\end{equation}

 Now, we consider the \verb+Diagonalize+$(G_1,x),$ where $x=-\lambda(G_2).$ 
Since $G_1$ and  $G_2$ have the same sequence $b_i$ for $i>t_k$ then they have the same values and therefore the same  signs.
Let $\alpha$ be the most recent assignment common to both graphs. If $G_2$ is the graph with largest $\lambda^{-}(G)$ then by Lemma \ref{lemamax}
 we have $\frac{1}{x} < \alpha$ and $\alpha <2,$ since that \subia~was executed. 
Furthermore $G_1$ can be obtained from $G_2$ by changing a single $b_i$ from $1$ to $0.$ By Lemma \ref{lema6} (item $i$)  we will have the final value $ \alpha'_1 < \alpha_1=0,$ that is 
\begin{equation}
\label{eq7}
\delta_{d}(G_1) = (-,+,+,-,+,\ldots,-,+,+,-,+, \ldots,-,+)
\end{equation}

Therefore thus  comparing the signs of final diagonal of both graphs in (\ref{eq6}) and (\ref{eq7}) we have  $\lambda^{-}(G_1) < \lambda^{-}(G_2).$
The proof is similar for the others items.
\end{proof}


\begin{Cor}
\label{cor1}
Among all threshold graphs of order $n=2k,$ the anti-regular graph $A_n$ has  the largest eigenvalue less than $-1.$
\end{Cor}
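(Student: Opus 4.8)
The plan is to combine Theorem \ref{main5} with the partial results already established in \cite{Cesar2}. Recall that \cite{Cesar2} reduces the conjecture to the $n-2$ critical threshold graphs: for every non-critical threshold graph $G$ on $n=2k$ vertices one already knows $\lambda^{-}(G) \le \lambda^{-}(A_n)$ by the interlacing argument. So it suffices to prove that $\lambda^{-}(G) \le \lambda^{-}(A_n)$ for each critical $G$, and in fact it is enough to show that among all $n-2$ critical graphs the maximum of $\lambda^{-}$ is attained at $A_n$ itself. The critical graphs split into two families according to the prefix: those with $s_1 = 2$ (with exactly one other block of size two) and the single graph with $s_1 = 3$.

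First I would treat the family with $s_1 = 2$. Here Theorem \ref{main5} gives a chain of strict inequalities: moving the "extra" block of size two rightward through the creation sequence strictly increases $\lambda^{-}$, whether that block is a run of zeros or a run of ones. Concretely, item (i) shows $\lambda^{-}(0^2 1 01\ldots 0^2 1 01\ldots 01) < \lambda^{-}(0^2 1 01 \ldots 0 1^2 01 \ldots 01) < \lambda^{-}(0^2 1 01 \ldots 0 1 0^2 1 \ldots 01)$, i.e. each step of sliding the size-two block one position to the right (alternating between a $0^2$ block and a $1^2$ block) strictly raises $\lambda^{-}$; item (ii) does the symmetric thing for blocks near the left end. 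Iterating, the largest $\lambda^{-}$ in this family is obtained when the extra block is pushed as far right as possible, and the resulting extremal sequence is precisely $(0^2 101 \ldots 01) = A_n$ with $n = 2k$. Thus $\lambda^{-}(G) < \lambda^{-}(A_n)$ for every critical $G \ne A_n$ in the $s_1 = 2$ family, and $\lambda^{-}(A_n)$ is the maximum over this family.

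Next I would dispose of the lone graph $G_0 = (0^3 1 01 \ldots 01)$ with $s_1 = 3$. The idea is the same Lemma \ref{lema3}/Lemma \ref{lema6} mechanism: compare \verb+Diagonalize+$(G_0, x)$ with \verb+Diagonalize+$(A_n, x)$ for $x = -\lambda^{-}(A_n)$ (or, symmetrically, for $x=-\lambda^-(G_0)$), using that $A_n$ is obtained from $G_0$ by changing one $b_l$ from $0$ to $1$ (the third coordinate), and that $G_0$ and $A_n$ have the same number of substrings $01$, hence the same number of positive eigenvalues. Using the sign pattern of the final diagonal from Theorem \ref{mainan} to locate the relevant $\alpha_{l+1}$ relative to $\tfrac1x$ and $2$, Lemma \ref{lema6} (or Lemma \ref{lema7}) forces the final top entry to change sign in the direction that, via Lemma \ref{lema3}(i), yields $\lambda^{-}(G_0) \le \lambda^{-}(A_n)$. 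Combining the two families, $A_n$ has the largest eigenvalue less than $-1$ among all critical graphs, and together with the non-critical case from \cite{Cesar2} this proves the corollary.

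The main obstacle I expect is the bookkeeping in the $s_1=3$ comparison: one must verify that the common assignment $\alpha_{l+1}$ at the position where the single bit is flipped lies in the interval demanded by the appropriate part of Lemma \ref{lema6} or Lemma \ref{lema7} (i.e. that \subic{} is avoided there and that the $\tfrac1x < \alpha < 2$ or $\alpha < \tfrac1x$ hypothesis genuinely holds), and that the Remark after Theorem \ref{mainan} rules out \subic{} at every intermediate step so that the monotonicity of the composition $h_2 \circ \cdots \circ h_l$ applies. Once the interval membership is pinned down, the sign bookkeeping is routine given Theorem \ref{mainan}.
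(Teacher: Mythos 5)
There is a genuine gap in your treatment of the $s_1=2$ family. Theorem \ref{main5} only orders the critical graphs among themselves: item (i) pushes the extra doubled block to the right in the right half, item (ii) pushes it to the left in the left half, so the maximum of $\lambda^{-}$ over this family is attained at one of the \emph{two} endpoints of these chains, namely $G_1=(0^2 1^2 01\ldots01)$ or $G_3=(0^2 101\ldots 011^2)$. Your claim that "the resulting extremal sequence is precisely $(0^2 101\ldots01)=A_n$" is false: no critical graph equals $A_n$, since every critical graph on $n=2k$ vertices has $s_1\geq 2$ while $A_{2k}=(0101\ldots01)$ has $s_1=1$. Consequently the chains from Theorem \ref{main5} never reach $A_n$, and a separate direct comparison of the extremal critical graphs with $A_n$ is unavoidable. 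This comparison is in fact the bulk of the paper's proof: it shows $\lambda^{-}(G_1)<\lambda^{-}(G_2)<\lambda^{-}(A_n)$ (where $G_2=(0^3101\ldots01)$ is the $s_1=3$ graph, so the left extreme is routed through $G_2$) and, separately, $\lambda^{-}(G_3)<\lambda^{-}(A_n)$. The latter is the most delicate step and is entirely absent from your plan: one runs \verb+Diagonalize+$(G_3,x)$ with $x=-\lambda^{-}(A_n)$, the first iteration on the trailing $1^2$ leaves $d_n=2(x-1)>0$ and passes $\alpha=\frac{x+1}{2}$ to what is effectively $A_{n-1}$, and ruling out $d_1>0$ requires the eigenvalue-free interval $\Omega=[\frac{-1-\sqrt2}{2},\frac{-1+\sqrt2}{2}]$ from \cite{Cesar2} (otherwise $A_{n-1}$ would have an eigenvalue in $(\frac{-1-\sqrt2}{2},0)$). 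Nothing in your proposal substitutes for this argument.

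Your handling of the $s_1=3$ graph is essentially the paper's argument for $\lambda^{-}(G_2)<\lambda^{-}(A_n)$ (compare the diagonalizations at $x=-\lambda^{-}(A_n)$, locate the common assignment $\frac{2}{x+1}\in(\frac1x,2)$ at $m=3$, apply Lemma \ref{lema6}(i) and then Lemma \ref{lema3}), though note the two sequences differ in the coordinate $b_2$, not $b_3$: flipping $b_2$ of $A_n=(0101\ldots01)$ from $1$ to $0$ yields $(0^3101\ldots01)$. To repair the proposal you would need to (a) state correctly that Theorem \ref{main5} leaves two extremal candidates $G_1$ and $G_3$, (b) supply the comparison $\lambda^{-}(G_1)<\lambda^{-}(G_2)$ (done as in Theorem \ref{main5}), and (c) supply the $G_3$ versus $A_n$ comparison with the $\Omega$-interval argument sketched above.
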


\begin{proof}
Let $G_1, G_2, G_3$ and $A_n$ be  threshold graphs having binary sequence $G_1= (0^{2} 1^2 01 \ldots 0 1  01 \ldots 01),$  
$G_2= (0^{3} 1 01 \ldots 0 10 01 \ldots 01),$  $G_3= (0^{2} 1 01 \ldots \\0 1  01 \ldots 01^2)$ and $A_n=(010101\ldots 0101).$  
 We claim that
 \begin{equation}
 \label{eq8}
  \lambda^{-}(G_1) < \lambda^{-}(G_2) < \lambda^{-}(A_n)
 \end{equation} 
 and 
  \begin{equation}
  \label{eq9}
  \lambda^{-}(G_3) <  \lambda^{-}(A_n)
 \end{equation} 
  The inequality on the left in (\ref{eq8}) is proved by similar way to  Theorem \ref{main5} above.
  Now, we check the inequality on the right in (\ref{eq8}). 
  
  We consider the \verb+Diagonalize+$(G_2,x),$ where $x=-\lambda^{-}(A_n).$
Since $G_2$ and $A_n$ have the same $b_i$ for $i\geq n-3$  then
\begin{equation}
\label{eq1}
\delta_{d}(G_2) = (\delta(d_1),\delta(d_2),\delta(d_3),+,-,+,\ldots,-,+)
\end{equation}
 
 We will show that $d_1<0$ and $d_2, d_3 >0.$ Since $b_1=b_2=b_3=0$ and $b_4=1$ the \subiib~occurs for the last three steps of algorithm, then
  $d_2, d_3 >0.$ 
 To see $d_1<0,$ we consider the assignment  $\alpha$ of \verb+Diagonalize+$(A_n,x),$ where $m=3.$
 Note we must have in $m=2$ the assignment $\frac{1}{x}$ in \verb+Diagonalize+$(A_n,x),$ according Lemma \ref{lema5} (item $i$), and \subia~ was executed in the previous step, follows that $\alpha = \frac{2}{x+1}$ is the assignment in $m=3.$ Since $\frac{1}{x} < \alpha  <2$ 
 and, $G_2$ can be obtained from $A_n$ by changing  $b_2$ from $1$ to $0.$ 
 Therefore thus, by Lemma \ref{lema6} (item $i$) follows that $\alpha'_1 < \alpha_1=0.$ Finally,  comparing the signs of final diagonal of both graphs  we have  $\lambda^{-}(G_2) < \lambda^{-}(A_n).$
 
Now, let $(d_i)$ be the final diagonal of \verb+Diagonalize+$(G_3,x),$ with $x=-\lambda^{-}(A_n).$ We claim that
\begin{equation}
\label{eq2}
\delta_{d}(G_3) = (-,+,+,-,+,\ldots,-,+,+)
\end{equation}

The \subia~ occurs in the first step of \verb+Diagonalize+$(G_3,x).$ Since that $x>1$ we have that $d_n= 2(x-1) >0$ and $\alpha=\frac{x+1}{2}.$ Now, we have a subgraph isomorphic to anti-regular graph $A_{n-1}$ and assignment  $\frac{x+1}{2} < x =-\lambda^{-}(A_n).$  Follows each
substring $01,$ left a positive value and each substring $10,$ left a negative value. 
It remains to check only the sign of the last iteration. Since \subiib~occurs in the last iteration, we have that $d_2>0.$
We suppose that $d_1>0.$ It implies that $A_{n-1}$ has $\lfloor \frac{n-1}{2} \rfloor +2$ eigenvalues greater than $-(\frac{x+1}{2}).$ Since $A_{n-1}$ has exactly 
$\lfloor \frac{n-1}{2} \rfloor$ positive eigenvalues, and $0$
is a simple eigenvalue, follows that $A_{n-1}$ has an eigenvalue in the interval  $(\frac{-1-\sqrt{2}}{2}, 0),$ what is a contradiction.
Thus, we must have $d_1 <  0,$  and comparing the signs of final diagonal of both graphs we have that $\lambda^{-}(G_3) < \lambda^{-}(A_n)$ as desired.
\end{proof}

\subsection{ The case $n$ is odd}

We now treat the case $n$ odd. Using a procedure similar to Theorem \ref{main5} with Lemma \ref{lema6} (part iii) and Lemma \ref{lema7}(part ii), it can be verified that:
\begin{itemize}
\item $\lambda^{+}(01^{2} 01 \ldots 01)< \lambda^{+}(0 10^{2} 01 \ldots 01)< \ldots < \lambda^{+}(0 1\ldots 1 0^{2} \ldots 01)$ 
for $ s_{k}=2,$  $t_k=2$ and   $k \leq \lfloor \frac{n}{2} \rfloor -1$ and 

\item $\lambda^{+}(0101 \ldots  0101^2)< \lambda^{+}(0 1 01 \ldots   010^{2}1 )< \ldots < \lambda^{+}(0 1\ldots 1 0^{2} \ldots 01)$
for $s_{k}=2,$ $t_k=2$  and $k\geq  \lfloor \frac{n}{2} \rfloor -1$
\end{itemize}

\begin{Cor}
\label{cor2}
Among all threshold graphs of order $n=2k+1,$ the anti-regular graph $A_n$ has  the smallest positive eigenvalue.
\end{Cor}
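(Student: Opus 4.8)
The plan is to adapt the argument of Corollary~\ref{cor1} to $n=2k+1$ and to the smallest positive eigenvalue. By the two chains of strict inequalities displayed just above---each link being a single bit change and handled by Lemma~\ref{lema6}(iii) or Lemma~\ref{lema7}(ii) as the doubled block is transported toward the centre of the string---the quantity $\lambda^{+}$, taken over all $n-2$ critical threshold graphs, is minimized at one of the two extremal strings $H_1=(0\,1^{2}\,01\ldots01)$ (doubled block $t_1=2$) and $H_2=(01\ldots01\,0\,1^{2})$ (doubled block $t_k=2$). Hence it suffices to prove $\lambda^{+}(A_n)\le\lambda^{+}(H_1)$ and $\lambda^{+}(A_n)\le\lambda^{+}(H_2)$, and in both cases I will apply Lemma~\ref{lema3}(ii) with $x=-\lambda^{+}(A_n)$, checking that \verb+Diagonalize+$(H_i,x)$ carries exactly one more positive diagonal entry than \verb+Diagonalize+$(A_n,x)$.

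For $H_1$: note that $H_1$ is obtained from $A_n=(0^{2}\,1\,01\ldots01)$ by changing the single bit $b_2$ from $0$ to $1$, so the two creation sequences agree from position $3$ onward. Consequently \verb+Diagonalize+$(A_n,x)$ and \verb+Diagonalize+$(H_1,x)$ run identically on positions $3,\ldots,n$, produce the same $\alpha$-values down to $\alpha_3$, and---inspecting the last two iterations $m=3$ and $m=2$---have final entries of equal sign at positions $2,\ldots,n$. Since \verb+Diagonalize+$(A_n,x)$ leaves a zero at the top (Lemma~\ref{lema2}) and $b_1=b_2=0$ in $A_n$, one computes $\alpha_2=\tfrac1x$ and then $\alpha_3=\tfrac2x$; as $x<0$, this gives $\alpha_3=\tfrac2x<\tfrac1x$, so Lemma~\ref{lema7}(ii) applies with $l=2$ (the factor $h_3=f$ being replaced by $g$, $x<0$) and forces the top entry of \verb+Diagonalize+$(H_1,x)$ to equal $\alpha'_1>\alpha_1=0$, i.e.\ to be strictly positive. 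Thus \verb+Diagonalize+$(H_1,x)$ has one more positive entry than \verb+Diagonalize+$(A_n,x)$, and Lemma~\ref{lema3}(ii) gives $\lambda^{+}(A_n)<\lambda^{+}(H_1)$.

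For $H_2$ I would follow the $G_3$ step of Corollary~\ref{cor1}: the doubled block $1^2$ is at the right end, so the first pass of \verb+Diagonalize+$(H_2,x)$ executes \subia\ on $b_{n-1}b_n=11$, leaving the negative entry $2(x-1)$ at the bottom and reducing the remaining computation to \verb+Diagonalize+ run on the creation sequence $(01)^{k}=A_{n-1}$ with first diagonal value $\tfrac{x+1}{2}$ instead of $x$. Using Lemma~\ref{lema4} and Lemma~\ref{lema5} (and the Remark, to rule out \subic\ at intermediate steps) one checks that this starting value stays on the correct side of the singularity $2-x$, so the signs along the $A_{n-1}$-portion form the expected alternation (each substring $01$ leaving a positive, each substring $10$ a negative entry), and the only undetermined sign---that of the top entry---is pinned down by a counting argument: the wrong sign would give $A_{n-1}$ more eigenvalues greater than $-\tfrac{x+1}{2}$ than its inertia permits, hence an eigenvalue inside the interval $\Omega$ of the Proposition of \cite{Cesar2}, a contradiction. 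Comparing $\delta_d(H_2)$ with $\delta_d(A_n)$ and invoking Lemma~\ref{lema3}(ii) then gives $\lambda^{+}(A_n)<\lambda^{+}(H_2)$, which together with the previous step proves the corollary; the case $n=3$, where $H_1=H_2=K_3$ and $\lambda^{+}(K_3)=2>\sqrt2=\lambda^{+}(A_3)$, is immediate.

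The step I expect to be hardest is precisely this sign bookkeeping for $H_2$. Because $x=-\lambda^{+}(A_n)<0$ reverses several of the signs that made the even case (Corollary~\ref{cor1}) transparent, one must verify that throughout the run the temporary value $\alpha$ stays strictly to the right of $2-x$---so that Lemma~\ref{lema4} applies and \subic\ never fires---and strictly on the correct side of $\tfrac1x$ except at the last step $m=2$, exactly as in Lemma~\ref{lemamax}; one also needs that the reduced instance really is the anti-regular graph $A_{n-1}$, so that its known inertia and the Proposition of \cite{Cesar2} may be invoked. Once these positions are under control, the remainder is the same monotonicity-and-counting argument already used in Theorem~\ref{main5} and Corollary~\ref{cor1}.
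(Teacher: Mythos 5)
Your proposal is correct and follows the paper's proof essentially step for step: the same reduction to the two extremal critical strings via the displayed chains, the same use of the zero atop \texttt{Diagonalize}$(A_n,x)$ to get $\alpha_3=\tfrac{2}{x}$ and force a positive top entry for $(01^{2}01\ldots01)$ (the paper unrolls subcases 1a/2b explicitly where you invoke Lemma~\ref{lema7}(ii), a cosmetic difference), and the same mirrored $G_3$-argument for $(0101\ldots01^{2})$, which the paper itself dispatches with ``use the same procedure, changing the signs.'' The only slip is your parenthetical for $H_2$, which states the even-case sign pattern: with $x<0$ each substring $01$ leaves the negative entry $x$ and the $0$-positions carry the positive entries $\alpha+x-2$ --- a reversal you already flag when noting that the signs flip and must be rechecked.
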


\begin{proof}
It is sufficient to show the following inequalities
\begin{equation}
\label{eq12}
\lambda^{+}(00101 \ldots 01)< \lambda^{+}(01^201 \ldots 01)
\end{equation}
and
\begin{equation}
\label{eq13}
\lambda^{+}(00101 \ldots 01)< \lambda^{+}(0101 \ldots 01^2)
\end{equation}

Let denotes $G=(01^201\ldots 01)$ and $A_n=(00101\ldots 01).$
  We consider the \verb+Diagonalize+$(G,x),$ where $x=-\lambda^{+}(A_n).$
Since $G$ and $A_n$ have the same $b_i,$ for $i\geq3,$ in their creation sequence, then
\begin{equation}
\label{eq1}
\delta_{d}(G) = (\delta(d_1),\delta(d_2),\delta(d_3),+,-,+,-, \ldots,+,-)
\end{equation}
 
 We will show that $d_1>0$ and $d_2, d_3 <0.$ 
   We consider the assignment  $\alpha$ of \verb+Diagonalize+$(A_n,x),$ where $m=3.$
 Note we must have in $m=2$ the assignment $\frac{1}{x}$ in \verb+Diagonalize+$(A_n,x),$ according Lemma \ref{lema5} (item $i.$), follows that $\alpha = \frac{2}{x}$ is also the assignment in $m=3$ in in \verb+Diagonalize+$(G,x).$  Then \subia~ gives the following assigments: $d_3=\frac{2}{x}+x-2<0$ and $d_2=\frac{1}{2/x+x-2} \in(-1,0).$ Then the \subiib~occurs for the last iteration and gives:
 $d_2=x<0,$ and $d_1= \alpha -\frac{1}{x} >0,$ since that $\alpha \in (-1,0)$ and $x<0.$
  
  Finally,  comparing the signs of final diagonal of both graphs we have that $\lambda^{+}(A_n) < \lambda^{+}(G).$
For the inequality (\ref{eq13}) use the same procedure of second part of Corollary \ref{cor1}  by changing the signs of graphs.
\end{proof}

\end{document}